\documentclass[11pt,letterpaper]{article}

\usepackage[T1]{fontenc}
\usepackage[utf8]{inputenc}
\usepackage[letterpaper,margin=1in]{geometry}
\usepackage{lmodern}
\usepackage{microtype}
\usepackage{amsmath,amssymb,amsthm,mathtools}
\usepackage{aliascnt}
\usepackage{enumitem}

\usepackage{authblk}

\usepackage[colorlinks=true,linkcolor=blue,citecolor=blue,urlcolor=blue]{hyperref}
\usepackage{cleveref}

\newtheorem{theorem}{Theorem}

\newaliascnt{lemma}{theorem}
\newtheorem{lemma}[lemma]{Lemma}
\aliascntresetthe{lemma}

\newaliascnt{proposition}{theorem}
\newtheorem{proposition}[proposition]{Proposition}
\aliascntresetthe{proposition}

\newaliascnt{definition}{theorem}
\newtheorem{definition}[definition]{Definition}
\aliascntresetthe{definition}

\newtheorem{assumption}{Assumption}

\newaliascnt{remark}{theorem}
\newtheorem{remark}[remark]{Remark}
\aliascntresetthe{remark}

\newaliascnt{corollary}{theorem}
\newtheorem{corollary}[corollary]{Corollary}
\aliascntresetthe{corollary}

\crefname{theorem}{Theorem}{Theorems}
\Crefname{theorem}{Theorem}{Theorems}
\crefname{lemma}{Lemma}{Lemmas}
\Crefname{lemma}{Lemma}{Lemmas}
\crefname{proposition}{Proposition}{Propositions}
\Crefname{proposition}{Proposition}{Propositions}
\crefname{definition}{Definition}{Definitions}
\Crefname{definition}{Definition}{Definitions}
\crefname{assumption}{Assumption}{Assumptions}
\Crefname{assumption}{Assumption}{Assumptions}
\crefname{remark}{Remark}{Remarks}
\Crefname{remark}{Remark}{Remarks}
\crefname{corollary}{Corollary}{Corollaries}
\Crefname{corollary}{Corollary}{Corollaries}

\newcommand{\bbR}{\mathbb{R}}

\newcommand{\cS}{\mathcal{S}}
\newcommand{\cX}{\mathcal{X}}
\newcommand{\cY}{\mathcal{Y}}

\newcommand{\argmin}{\operatorname*{argmin}}
\newcommand{\argmax}{\operatorname*{argmax}}
\newcommand{\dist}{\operatorname{dist}}
\newcommand{\ip}[2]{\left\langle #1,#2\right\rangle}
\newcommand{\norm}[1]{\lVert #1\rVert}

\DeclarePairedDelimiter{\abs}{\lvert}{\rvert}

\title{Sharp Dimension Dependence for the Last Iterate of the SubGradient Method}
\author[1]{Guglielmo Beretta}
\author[2]{Tommaso Cesari}
\author[3]{Roberto Colomboni}
\author[1]{Andrea Paudice}

\affil[1]{Department of Computer Science, Aarhus University, Aarhus, Denmark}
\affil[2]{School of Electrical Engineering and Computer Science, University of Ottawa, Ottawa, Canada}
\affil[3]{School of Mathematics, University of Bristol, Bristol, United Kingdom}

\affil[ ]{
\texttt{gberetta@cs.au.dk},
\texttt{tcesari@uottawa.ca},
\texttt{roberto.colomboni@bristol.ac.uk},
\texttt{apaudice@cs.au.dk}
}
\date{}
\begin{document}

\maketitle
\begin{abstract}
We study the last iterate of the projected \emph{subGradient Method} (sGM) for convex Lipschitz objectives defined on $\bbR^d$.
We prove that, for a finite horizon $n$ and a constant stepsize $\eta=\Theta(1/\sqrt n)$, the last iterate achieves an optimization error of order $d/\sqrt n$, showing that the extra $\log n$ factor appearing in high dimensions is unnecessary in every fixed dimension.
We complement this result with a matching linear-in-$d$ lower bound and show that the sharp worst-case dimension-horizon dependence is of order $\min\{d,\log n\}/\sqrt n$.
This solves, in particular, a COLT open problem posed by Koren and Segal in 2020 and shows that the correct dependence on the dimension is linear rather than logarithmic.
\end{abstract}

\section{Introduction}\label{sec:introduction}
The \emph{subGradient Method} (sGM), introduced by Shor in 1962 \cite{Shor1962}, is one of the classical algorithms for nonsmooth convex optimization.
Its appeal comes from its minimal structure: at each iteration, the method only requires a subgradient of the objective and a \emph{stepsize parameter} $\eta$.
This simplicity has made subgradient-type methods a standard workhorse in large-scale optimization and machine learning, for instance in empirical risk minimization problems where nonsmooth regularizers or nonsmooth losses naturally arise; see, e.g.,~\cite{Bach2024}.
We study the projected recursion
\begin{equation}\label{eq:sgm_intro}
    x_{k+1}=\Pi_{\cX}(x_k-\eta g_k),
    \qquad
    g_k\in\partial f(x_k),
    \qquad
    1\le k<n,
\end{equation}
where $x_1\in\cX$, the feasible set $\cX\subseteq\bbR^d$ is nonempty, closed, convex, and contained in the domain of $f$, and $\Pi_{\cX}$ denotes Euclidean projection onto $\cX$.
The stepsize $\eta>0$ is constant along the run, although it may depend on the horizon $n$.
Since the objective may be nonsmooth, the subdifferential can be set-valued; all our upper bounds hold uniformly over every admissible selection $g_k\in\partial f(x_k)$, while the lower bound exhibits one valid adversarial selection.

For a prescribed horizon $n$, the classical finite-horizon choice is a constant stepsize of order $1/\sqrt n$, after the natural normalization of the problem parameters.
The standard nonasymptotic theory controls either the best iterate or a weighted average and gives an optimization error of order $1/\sqrt n$ for convex Lipschitz objectives \cite{Shor1968,Polyak1969}.
This rate is optimal in the usual dimension-free black-box model under a bound on the initial distance to the solution set \cite{Nemirovski1983}; equivalently, the standard lower-bound constructions can be embedded in any infinite-dimensional Hilbert space \cite{Nemirovski1983}.

The last iterate is more delicate.
For standard stepsize policies, general dimension-free analyses incur an additional logarithmic factor \cite{Shamir2013,Lin2018,Zamani2025}, and matching high-dimensional constructions show that this loss is unavoidable when the dimension is allowed to grow with the horizon \cite{Harvey2019,Harvey2024,Zamani2025}.
Those constructions leave open what happens when $d$ is fixed.
Koren and Segal isolated this gap in their COLT Open Problem~2 and asked for the convergence rate of the last point of nonsmooth deterministic GD with a fixed stepsize $\eta=\Theta(1/\sqrt n)$ in constant dimension \cite{Koren2020}.
They also proposed $\Theta(\log d/\sqrt n)$ as a natural dimension-dependent conjecture for the corresponding stochastic problem.
Liu and Lu subsequently proved a deterministic lower bound of order $\log d/\sqrt n$, valid also for the fixed stepsize $\eta=1/\sqrt n$, and argued that this logarithmic dependence was the likely correct one \cite{LiuLu2021}.

Our results reveal a substantially different picture.
The logarithm in the horizon disappears in every fixed dimension, but the price of dimension is not logarithmic: it is linear.
In particular, the previous $\Omega(\log d)$ lower bound is not close to the true deterministic worst case once the horizon is sufficiently large relative to $d$.
The exact transition is governed by $\min\{d,\log n\}$.
This is the strikingly surprising part of the result: the literature pointed toward logarithmic dimension dependence, whereas the terminal iterate can in fact accumulate a constant amount of error across linearly many independent geometric levels.

\paragraph{Upper bound.}
For every $d\ge1$, every horizon $n\ge1$, every stepsize $\eta>0$, every convex globally $L$-Lipschitz function $f:\bbR^d\to\bbR$ with a nonempty constrained minimizer set $\cX_\star$, every initialization, and every admissible subgradient sequence, \Cref{thm:upper} gives
\begin{equation}\label{eq:main_informal}
    f(x_n)-f_\star
    \le
    \left(2d+\frac12\right)\eta L^2
    +
    \frac{\dist(x_1,\cX_\star)^2}{2n\eta}.
\end{equation}
Consequently, every standard family $\eta_n=c_n/\sqrt n$ with $0<\underline c\le c_n\le\overline c<\infty$ has $O_d(1/\sqrt n)$ last-iterate error, resolving the deterministic fixed-dimensional question.

\paragraph{Matching lower bound.}
The upper bound has the correct order in $d$.
For every integer $k\ge1$ and every $d\ge2k+1$, \Cref{thm:lower} constructs a globally Lipschitz convex max-of-linear objective on $\bbR^d$, initialized at a minimizer, together with a valid constant-step trajectory satisfying
\[
    f(x_{n_k})-f_\star=\frac{k}{4}\eta L^2.
\]
Taking $k=\lfloor(d-1)/2\rfloor$ proves an $\Omega(d)\eta L^2$ lower bound.
Thus the smallest dimension-dependent coefficient in a uniform all-horizon estimate of the form \eqref{eq:main_informal} is $\Theta(d)$; only the numerical constant remains open.

\paragraph{Sharp dimension-horizon transition.}
The multiscale construction has horizon exponential in $k$.
This is not an artefact but exactly matches the classical dimension-free $O(\log n)$ theory.
Combining the new lower bound with \Cref{thm:upper} and the dimension-free bound of Zamani and Glineur \cite{Zamani2025}, we show that the worst-case normalized terminal excursion from a minimizer is
\[
    \Theta\!\bigl(\min\{d,\log(n+1)\}\bigr).
\]
For $\eta=\Theta(1/\sqrt n)$, this becomes the sharp multiplier
$\Theta(\min\{d,\log(n+1)\}/\sqrt n)$.
Hence the result not only answers whether the logarithm in $n$ disappears at fixed $d$; it completely determines how the two parameters trade off.

\paragraph{Proof ideas.}
The upper and lower bounds use complementary finite-dimensional mechanisms.
For the upper bound, after an exact translation-and-scaling normalization, a telescoping inequality controls the minimum objective value attained along the trajectory.
We then consider the last exit times from consecutive unit-spaced objective levels.
Displacements from these last-exit points to the terminal iterate have strictly negative pairwise inner products whenever their level indices differ by at least two, while a terminal subgradient places all of them in a common open halfspace.
An elementary rank argument bounds each parity class by $d$, so the terminal value can exceed the minimum trajectory value by at most $2d$.

For the lower bound, we first isolate a simple support-function principle: any \emph{prefix-minimizing sequence}, in which the next scheduled action minimizes its inner product with the cumulative past actions, generates a valid subgradient trajectory for the support function of the action set.
We then construct a multiscale prefix-minimizing sequence through a rational positive-definite Gram design in $2k+1$ dimensions.
Each of the $k$ geometrically separated stages contributes exactly $1/4$ to the terminal depth while preserving all inequalities established at previous stages.
The scale separation makes the horizon exponential in $k$, producing the matching $\min\{d,\log n\}$ transition.

\paragraph{Related work.}
For the \emph{anytime} schedule $\eta_k=\eta/\sqrt k$, Shamir and Zhang \cite{Shamir2013} proved a last-iterate upper bound of order $(\log n)/\sqrt n$, and Harvey et al.~\cite{Harvey2019} proved a matching lower bound using a dimension growing linearly with the horizon; the construction can also be embedded in infinite-dimensional spaces \cite{Harvey2024}.
For projected sGM with constant stepsizes, Zamani and Glineur \cite{Zamani2025} obtained exact dimension-free worst-case rates and showed that the logarithmic horizon dependence is tight in sufficiently high dimension.
They also showed that the optimal $1/\sqrt{n}$ order can be recovered, when the horizon $n$ is known in advance, by using a nonconstant time-varying stepsize policy.
Related horizon-dependent stepsize policies yielding optimal last-iterate rates were also studied in \cite{Jain2021,Liu2024}.

To the best of our knowledge, the first general lower bound exposing dimension as a parameter is due to Liu and Lu \cite{LiuLu2021}.
For nonsmooth convex objectives they proved $\Omega(\log d/\sqrt n)$ lower bounds for both the decreasing schedule $\eta_k=1/\sqrt k$ and the fixed schedule $\eta=1/\sqrt n$.
Our construction strengthens the fixed-stepsize deterministic lower bound from logarithmic to linear dependence on $d$, and, together with the upper bound, identifies the exact order.

\section{Problem Setting and Main Results}\label{sec:setting}

\paragraph{Notation.}
Fix an integer $d\ge1$.
In this paper, $k$ and $n$ denote positive integers.
If $x,y\in\bbR^d$, then $\norm{x}$ and $\ip{x}{y}$ denote the Euclidean norm of $x$ and the inner product between $x$ and $y$, respectively.
For a nonempty closed convex set $S\subseteq\bbR^d$, we write $\Pi_S$ for Euclidean projection on $S$ and set
\[
    \dist(x,S)\coloneq\inf_{z\in S}\norm{x-z}=\norm{x-\Pi_S(x)}.
\]
All logarithms without an indicated base are natural.

\paragraph{Problem formulation.}
We consider a function $f:\bbR^d\to\bbR$ and the optimization problem
\[
    \min_{x\in\cX} f(x)
\]
under the following assumptions.

\begin{assumption}[Feasible set]\label{ass:minimizers}
The set $\cX\subseteq\bbR^d$ is nonempty, closed, and convex, and
\[
    \cX_\star\coloneq\argmin_{x\in\cX}f(x)
\]
is nonempty.
\end{assumption}

\begin{assumption}[Convexity and Lipschitzness]\label{ass:lipschitz}
The function $f$ is convex and globally $L$-Lipschitz for some $L\ge0$, i.e.,
\[
    \abs{f(x)-f(y)}\le L\norm{x-y}
    \qquad
    \text{for every }x,y\in\bbR^d.
\]
\end{assumption}
By \Cref{ass:minimizers}, define
\[
    f_\star\coloneq\min_{x\in\cX}f(x)\in\bbR.
\]
The constant $L$ is part of the problem data and need not be the minimal Lipschitz constant.

\begin{definition}\label{def:subgradient}
Let $x\in\bbR^d$.
A vector $g\in\bbR^d$ is a \emph{subgradient} of $f$ at $x$ if
\[
    f(y)\ge f(x)+\ip{g}{y-x}
    \qquad
    \text{for every }y\in\bbR^d.
\]
We write $\partial f(x)$ for the set of subgradients of $f$ at $x$.
\end{definition}

For additional background in convex analysis, see \Cref{app:useful}.
By \Cref{lem:ambient-subgradients}, $\partial f(x)$ is nonempty for every $x\in\bbR^d$, and every $g\in\partial f(x)$ satisfies $\norm{g}\le L$.
We impose no tie-breaking rule when the subdifferential contains more than one element.

\begin{theorem}[Fixed-dimensional upper bound]\label{thm:upper}
Suppose \Cref{ass:minimizers,ass:lipschitz} hold.
Let $n\ge1$, $\eta>0$, and $x_1\in\cX$.
Let $(x_k)_{k=1}^n$ be any sequence generated by \eqref{eq:sgm_intro} using arbitrary choices $g_k\in\partial f(x_k)$ for every $1\le k<n$.
Then
\begin{equation}\label{eq:main-bound}
    f(x_n)-f_\star
    \le
    \left(2d+\frac12\right)\eta L^2
    +
    \frac{\dist(x_1,\cX_\star)^2}{2n\eta}.
\end{equation}
\end{theorem}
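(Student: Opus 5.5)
The plan is to split the terminal gap as
\[
f(x_n)-f_\star=\Bigl(f(x_n)-\min_{1\le t\le n}f(x_t)\Bigr)+\Bigl(\min_{1\le t\le n}f(x_t)-f_\star\Bigr)
\]
and to bound the two brackets separately. First I normalize: translate so that $\Pi_{\cX_\star}(x_1)$ is the origin, and rescale space and $f$ so that $\eta=1$ and $L=1$. The recursion \eqref{eq:sgm_intro} and both sides of \eqref{eq:main-bound} transform consistently, so it suffices to prove $f(x_n)-f_\star\le 2d+\tfrac12+\dist(x_1,\cX_\star)^2/(2n)$. The projection never causes trouble, because every comparison point $y$ I use lies in $\cX$, so nonexpansiveness of $\Pi_{\cX}$ gives
\begin{equation*}
\norm{x_{t+1}-y}^2\le\norm{x_t-y}^2-2\bigl(f(x_t)-f(y)\bigr)+1 .
\end{equation*}
Here I also used $g_t\in\partial f(x_t)$, $\norm{g_t}\le1$ (\Cref{lem:ambient-subgradients}) and the subgradient inequality. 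Taking $y\in\cX_\star$ and telescoping over $t<n$, and separately handling the case where the minimum is attained at $t=n$, I expect $m\coloneq\min_{t\le n}f(x_t)\le f_\star+\tfrac12+\dist(x_1,\cX_\star)^2/(2n)$, possibly after a one-index adjustment. The theorem then reduces to the purely geometric claim $f(x_n)-m\le 2d$.

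For the geometric claim, suppose $f(x_n)>m+J$ for an integer $J\ge1$, and aim to show $J\le 2d$. For each $j=0,1,\dots,J-1$, let $\tau_j$ be the last time $t\le n$ with $f(x_t)\le m+j$. Then $\tau_0\le\tau_1\le\dots$, and every iterate after $\tau_j$ has value $>m+j$. Set $v_j\coloneq x_n-x_{\tau_j}$ and fix any $g\in\partial f(x_n)$. Since $f(x_{\tau_j})\le m+j<f(x_n)$, the subgradient inequality gives $\ip{g}{v_j}\ge f(x_n)-f(x_{\tau_j})>0$, so all the $v_j$ lie in the common open halfspace $\{\ip{g}{\cdot}>0\}$. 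Next I prove that $\ip{v_i}{v_j}<0$ whenever $j\ge i+2$. To do so, I apply the displayed one-step inequality with $y=x_{\tau_i}$ along $t=\tau_j+1,\dots,n-1$, where every $f(x_t)$ exceeds $f(y)$ by more than $j-i-1\ge1$. I then combine this with the step at $\tau_j$ itself, and with the analogous inequality centered at $y=x_{\tau_j}$, to relate $\norm{v_i}^2$, $\norm{v_j}^2$ and $\norm{v_i-v_j}^2$.

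Finally, I invoke the elementary rank lemma: in $\bbR^d$, a family of nonzero vectors lying in a common open halfspace with pairwise strictly negative inner products has at most $d$ members. This follows because such vectors are linearly independent: in a dependence relation, split the coefficients by sign; pairwise negativity forces the positive part to vanish, and the halfspace then rules out the trivial combination. Applying the lemma to the even-indexed and to the odd-indexed $v_j$ gives at most $d$ each, so $J\le 2d$. Hence $f(x_n)-m\le 2d$, and combined with the bound on $m$ this yields \eqref{eq:main-bound}.

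The main obstacle I expect is the pairwise negativity step. Telescoping naively against $y=x_{\tau_i}$ only gives $\norm{v_i}^2<\norm{v_i-v_j}^2$, i.e.\ $2\ip{v_i}{v_j}<\norm{v_j}^2$, which is not strict negativity. The fix I would try first uses three ingredients: the per-step decrease being at least $2(j-i-1)-1\ge1$ (unit spacing is exactly what makes the $+1$ noise term harmless), the symmetric inequality centered at $x_{\tau_j}$, which uses $f(x_{\tau_j})\le m+j<f(x_t)$ for later $t$, and the fact that the step leaving $x_{\tau_j}$ crosses level $j$. If these do not close the gap, I would redefine the $v_j$ as displacements from $x_{\tau_j+1}$ rather than from $x_{\tau_j}$, or adjust the level offsets by $\tfrac12$, and recheck the bookkeeping. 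The constants $2d$ and $\tfrac12$ leave exactly this much slack.
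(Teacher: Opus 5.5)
Your outline follows the paper's proof. It uses the same normalization, the same split into the minimum trajectory value plus the excursion above it, last exits from unit-spaced levels, displacement vectors to the terminal iterate placed in an open halfspace by a terminal subgradient, and the rank lemma on each parity class. The step you flag as the main obstacle closes with exactly the ingredients you list, so there is no need to redefine the $v_j$ or shift the levels. For $i\le j-2$, every step $t=\tau_j,\dots,n-1$ has $f(x_t)-f(x_{\tau_i})>1$, so it decreases $\norm{x_t-x_{\tau_i}}^2$ by more than $1$. Summing over these $T_j\coloneq n-\tau_j$ steps gives $\norm{v_i}^2<\norm{v_i-v_j}^2-T_j$; your ``naive'' version simply discarded the $-T_j$. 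Centering at $x_{\tau_j}$ gives $\norm{v_j}^2\le T_j$, because the gap is zero at $t=\tau_j$ and positive afterwards. Expanding then yields $2\ip{v_i}{v_j}<\norm{v_j}^2-T_j\le0$.

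Three points still need repair before the argument proves the stated bound.

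\emph{Strict ordering.} The gap bound at $t=\tau_j$ itself needs $f(x_{\tau_j})>m+i+1$, i.e.\ $\tau_j>\tau_{i+1}$. Your non-strict ordering $\tau_0\le\tau_1\le\cdots$ does not give this. The missing fact is the unit-increment bound $\abs{f(x_{t+1})-f(x_t)}\le\norm{x_{t+1}-x_t}\le\norm{g_t}\le1$. It forces $\tau_j+1$ into the set defining $\tau_{j+1}$, hence strict ordering.

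\emph{Level count.} Your count is off by one. Under $f(x_n)>m+J$ you use only the levels $0,\dots,J-1$, and proving $J\le2d$ gives only $f(x_n)-m\le2d+1$, so you would end with $2d+\frac32$. Since $f(x_n)>m+J$, level $J$ also lies strictly below the terminal value and can be included. With $q\coloneq f(x_n)-m$ and $J\coloneq\lceil q\rceil-1$, this gives $q\le J+1\le2d$.

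\emph{Denominator in the minimum bound.} Telescoping over $t<n$ puts $n-1$ in the denominator of the bound on $m$, and gives nothing at all when $n=1$. Treating separately the case where the minimum is attained at $t=n$ does not fix this. Instead, append a phantom step $x_{n+1}\coloneq\Pi_{\cX}(x_n-g)$ with the same terminal subgradient $g\in\partial f(x_n)$ you already use for the halfspace, and telescope over $t=1,\dots,n$.
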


\begin{theorem}[Linear dimension lower bound]\label{thm:lower}
Let $k\ge1$ and $d\ge2k+1$ be integers, and let $\eta,L>0$.
There exist an integer $n_k\le1000^{k+1}$, a convex globally $L$-Lipschitz function $f:\bbR^d\to\bbR$ with
\[
    0\in\argmin_{x\in\bbR^d}f(x),
\]
and a valid unconstrained constant-step trajectory initialized at $x_1=0$ such that
\begin{equation}\label{eq:lower-main}
    f(x_{n_k})-f_\star
    =
    \frac{k}{4}\eta L^2.
\end{equation}
The same terminal gap can be attained at every horizon $n\ge n_k$ by prefixing zero-subgradient updates at the minimizer.
\end{theorem}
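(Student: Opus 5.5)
We first normalize: by the scaling $f\mapsto Lf(\cdot/(\eta L))$ applied to trajectories, it suffices to treat $\eta=L=1$ and prove $f(x_{n_k})-f_\star=k/4$. We take $f$ to be a support function $f(x)=\max_{a\in A}\ip{a}{x}$ of a finite set $A\subseteq\bbR^d$ of vectors with $\norm{a}\le1$ and $0\in\operatorname{conv}A$. Then $f$ is convex and $1$-Lipschitz, $f\ge0$, and $f(0)=0$, so $0\in\argmin f$ and $f_\star=0$. The subdifferential at $x$ is the convex hull of the active $a\in A$ (\Cref{lem:ambient-subgradients} and standard max-rule facts from \Cref{app:useful}).

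\textbf{Support-function principle.} Suppose we schedule actions $a_1,\dots,a_{n-1}\in A$ and set $x_1=0$, $x_{m+1}=x_m-a_m$, so that $x_m=-S_{m-1}$ with $S_{m-1}=\sum_{i<m}a_i$. Then $a_m\in\partial f(x_m)$ holds exactly when $a_m$ maximizes $\ip{a}{-S_{m-1}}$ over $A$, i.e.\ when
\[
\ip{a_m}{S_{m-1}}=\min_{a\in A}\ip{a}{S_{m-1}}.
\]
We call such a schedule prefix-minimizing. The terminal value is then $f(x_n)=-\min_{a\in A}\ip{a}{S_{n-1}}$. The whole proof therefore reduces to exhibiting, in $\bbR^{2k+1}$, a finite set $A$ of vectors of norm at most $1$ with $0\in\operatorname{conv}A$, together with a prefix-minimizing schedule of length below $1000^{k+1}$ whose final cumulative sum $S$ satisfies $\min_{a\in A}\ip{a}{S}=-k/4$. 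Embedding $\bbR^{2k+1}\subseteq\bbR^d$ handles $d\ge2k+1$. Prepending zero steps handles larger horizons: we add $0$ to $A$ if needed, or note that at $x=0$ the element $0$ lies in $\operatorname{conv}A=\partial f(0)$.

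\textbf{Multiscale construction.} We build the schedule in $k$ stages on a growing orthogonal frame: one shared direction $e_0$ and, for stage $j$, two new coordinates. We specify the vectors of $A$ through a rational positive-definite Gram matrix, which guarantees realizability in $2k+1$ dimensions with norms at most $1$. Stage $j$ runs on time scale roughly $1000^j$. During stage $j$, the cumulative sum inherited from earlier stages is nearly constant relative to the new, much longer block, so the earlier contributions act only as a small perturbation. Within the stage, the actions alternate between the two new vectors in the pattern that a prefix-minimizing rule dictates (a two-dimensional zig-zag around a ``ridge''). This alternation is designed to push $-S$ a further exact amount so that the best response in $A$ gains $1/4$ in $\ip{a}{-S}$. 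The inner products are chosen so that this gain adds exactly $1/4$ to the terminal depth while the new vectors remain non-minimizers of $\ip{\cdot}{S}$ at all times when they are not played. We then verify by induction on $j$ two properties: (i) every scheduled action is a minimizer of $\ip{\cdot}{S_{m-1}}$, and (ii) after stage $j$, $\min_{a\in A}\ip{a}{S}=-j/4$, attained by a designated vector. Rational entries make ``exactly $1/4$'' a finite arithmetic check, and the geometric factor $1000$ bounds the total horizon by $\sum_j O(1000^j)\le1000^{k+1}$.

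\textbf{Main obstacle.} The hardest step is the induction invariant (i) across stages. Later-stage vectors must not become spuriously better minimizers during earlier stages, and stage-$j$ moves must not destroy the inequalities (including the exact depth) established earlier. I would first fix a candidate Gram matrix with small negative couplings between consecutive stages and zero couplings otherwise. I would then reduce (i) to finitely many linear inequalities in the scale ratio, and check that ratio $1000$ leaves slack in each. If exactness fails, I would tune one rational free parameter per stage so that the stage gain is exactly $1/4$, keeping positive definiteness by diagonal dominance.
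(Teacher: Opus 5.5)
Your reduction is correct and is the paper's own support-function lemma: a support function of a finite subset of the unit ball, actions chosen to minimize $\ip{a}{S}$ against the cumulative prefix, terminal gap $-\min_a\ip{a}{S}$, and zero padding in both dimension and horizon. (One slip: the rescaling must be $f(x)=\eta L^2F(x/(\eta L))$; your $LF(x/(\eta L))$ is $1/\eta$-Lipschitz rather than $L$-Lipschitz.)

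The genuine gap is that everything after the reduction is a list of desiderata. The theorem \emph{is} the existence of a prefix-minimizing sequence in $\bbR^{2k+1}$ of depth exactly $k/4$ and length below $1000^{k+1}$. You give no Gram matrix, no action set, no schedule, and no invariant that is actually checked. Sentences such as ``this alternation is designed to push $-S$ a further exact amount'' or ``if exactness fails, I would tune one parameter per stage'' restate the goal. The ``main obstacle'' you identify is exactly the part that remains unproved.

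Moreover, the concrete choices you do commit to point the wrong way. In the paper, stage $i$ alternates a nearly antipodal pair with $\norm{u_i}^2=1-3\varepsilon_i$, $\norm{v_i}^2=1$, $\ip{u_i}{v_i}=-(1-2\varepsilon_i)$, so that $w_i=u_i+v_i$ has $\norm{w_i}^2=\varepsilon_i$. For $i<j$ it sets $\ip{u_i}{u_j}=\ip{u_i}{v_j}=0$ and $\ip{v_i}{u_j}=\ip{v_i}{v_j}=-\varepsilon_i$, with $N_i\varepsilon_i=1/4$.

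\emph{Coupling pattern.} Each completed stage lowers the score of \emph{every} later action, including the witness $z=u_{k+1}$, by exactly $1/4$. This puts $u_j$, all later actions and $z$ at a common score $\mu_{j,r}=-(j-1)/4-r\varepsilon_j$. It is this all-pairs coupling that makes the per-stage gains add. Under your consecutive-only coupling, a later vector would see only the immediately preceding stage.

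\emph{Scale ordering.} Because $\ip{w_i}{w_j}=-2\varepsilon_i$ is symmetric, each pair of a later stage $j$ lowers the past $v_i$ by $2\varepsilon_i$. Over $N_j$ pairs this totals $2\varepsilon_iN_j=\frac12N_j/N_i$. That is harmless against the margin $3/4$ only if later stages are geometrically \emph{shorter}, which is why $N_{i+1}=N_i/100$. Cauchy--Schwarz, $2\varepsilon_i\le\sqrt{\varepsilon_i\varepsilon_j}$, already forces $N_i\ge4N_j$. Relative to this mechanism your ordering is reversed (stage $j$ on scale $1000^j$, earlier stages a ``small perturbation''). The earlier stages carry the accumulated depth $(j-1)/4$; it is the later stages that must be perturbative.

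\emph{Positive definiteness.} Diagonal dominance does not hold in the natural coordinates here: the $u_i$ row has diagonal $1-3\varepsilon_i$ but an off-diagonal entry of size $1-2\varepsilon_i$. The paper instead passes to coordinates $u_i$, $(u_i+v_i)/\sqrt{\varepsilon_i}$. There the drift block has off-diagonal entries $-2\cdot10^{-\abs{i-j}}$, and Gershgorin plus a norm bound on the coupling block applies.

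The remaining verifications are also what a complete proof must supply: $v_j$ is the unique minimizer after $u_j$ because $3N_j\varepsilon_j=3/4<1$, and every past $v_i$ stays above $\mu_{j,r}$ by $\gamma>1/2$.
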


To state the sharp joint dependence, for $d\ge1$ and $n\ge2$ define the normalized terminal excursion from a minimizer by
\begin{equation}\label{eq:Gamma-definition}
    \Gamma_{d,n}
    \coloneq
    \sup
    \frac{f(x_n)-f_\star}{\eta L^2},
\end{equation}
where the supremum ranges over all $\eta,L>0$, all instances satisfying \Cref{ass:minimizers,ass:lipschitz} in $\bbR^d$ with $x_1\in\cX_\star$, and all admissible trajectories of \eqref{eq:sgm_intro}.

\begin{corollary}[Sharp dimension-horizon dependence]\label{cor:sharp-joint}
There exist universal constants $c_0,C_0>0$ such that, for every $d\ge1$ and $n\ge2$,
\begin{equation}\label{eq:Gamma-theta}
    c_0\min\{d,\log(n+1)\}
    \le
    \Gamma_{d,n}
    \le
    \min\left\{
        2d+\frac12,
        1+\frac14\log(n-1)
    \right\}
    \le
    C_0\min\{d,\log(n+1)\}.
\end{equation}

Moreover, let $A_d^\star$ be the smallest $A\ge0$ such that, for every horizon, instance, stepsize, initialization, and admissible trajectory in dimension $d$,
\[
    f(x_n)-f_\star
    \le
    A\eta L^2
    +
    \frac{\dist(x_1,\cX_\star)^2}{2n\eta}.
\]
Then
\begin{equation}\label{eq:Ad-two-sided}
    \max\left\{1,\frac14\left\lfloor\frac{d-1}{2}\right\rfloor\right\}
    \le
    A_d^\star
    \le
    2d+\frac12,
\end{equation}
and therefore $A_d^\star=\Theta(d)$.
\end{corollary}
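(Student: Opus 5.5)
The plan is to assemble the corollary from \Cref{thm:upper}, \Cref{thm:lower}, and the dimension-free bound of Zamani and Glineur \cite{Zamani2025}, and then do some bookkeeping between $d$ and $\log n$.

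\emph{Upper bounds on $\Gamma_{d,n}$.} If $x_1\in\cX_\star$, the distance term in \eqref{eq:main-bound} vanishes, so $\Gamma_{d,n}\le 2d+\frac12$. For the second entry of the minimum I will invoke the exact constant-step, dimension-free last-iterate bound of \cite{Zamani2025}. Specialized to $\dist(x_1,\cX_\star)=0$, it should read $f(x_n)-f_\star\le \eta L^2\bigl(1+\frac14\log(n-1)\bigr)$, possibly after checking their normalization: their bound has the form $\frac{LR}{\sqrt{\cdot}}+\eta L^2\cdot(\text{harmonic-type sum})$, and the $R$ term drops out here. The final inequality $\min\{2d+\frac12,1+\frac14\log(n-1)\}\le C_0\min\{d,\log(n+1)\}$ is elementary, because $d\ge1$ and $\log(n+1)\ge\log 3$. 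Hence $2d+\frac12\le\frac52 d$ and $1+\frac14\log(n-1)\le(1/\log 3+1)\log(n+1)$.

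\emph{Lower bound on $\Gamma_{d,n}$.} Given $d$ and $n\ge2$, choose $k=\min\{\lfloor (d-1)/2\rfloor,\ \lfloor \log n/\log 1000\rfloor-1\}$. When $k\ge1$, we have $d\ge 2k+1$ and $n\ge1000^{k+1}\ge n_k$. \Cref{thm:lower}, together with the padding remark that zero-subgradient updates can be prefixed, gives a trajectory from a minimizer with gap $\frac k4\eta L^2$ at horizon exactly $n$. Thus $\Gamma_{d,n}\ge k/4\gtrsim\min\{d,\log(n+1)\}$ whenever $k\ge1$. The small cases ($d\le2$, or $n<1000^2$) are the main obstacle, since there we need $\Gamma_{d,n}\ge c$ for some absolute $c>0$ and $\min\{d,\log(n+1)\}$ is bounded. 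For these I would use the one-dimensional instance $f(x)=L|x|$ on $\cX=\bbR$ with $x_1=0$. The subgradient choice $g_1=-L$ (valid at $0$) gives $x_2=\eta L$, and then $x_3=0$, $x_4=\eta L$, and so on, oscillating. This yields $f(x_n)=\eta L^2$ at even $n$. At odd $n$, I first prefix one zero-subgradient step at $0$ (using $g_1=0$) and then start the oscillation, so $f(x_n)=\eta L^2$ for every $n\ge2$. Hence $\Gamma_{d,n}\ge1$ always, and shrinking $c_0$ covers all the small cases.

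\emph{The constant $A_d^\star$.} The upper bound $A_d^\star\le 2d+\frac12$ is exactly \Cref{thm:upper}. For the lower bound, any admissible $A$ must satisfy $A\ge\Gamma_{d,n}$ for all $n$, because at $x_1\in\cX_\star$ the distance term is zero. The $|x|$ example then gives $A_d^\star\ge1$, and \Cref{thm:lower} with $k=\lfloor(d-1)/2\rfloor$ (when $k\ge1$) gives $A_d^\star\ge\frac14\lfloor (d-1)/2\rfloor$. One must also confirm that the infimum defining $A_d^\star$ is attained. This holds because the constraint $A\ge\sup(\cdot)$ is closed. Combining the two bounds gives $A_d^\star=\Theta(d)$.
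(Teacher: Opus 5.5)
Your proposal is correct and follows the paper's proof essentially step by step. It uses the same $L\abs{x}$ witness for $\Gamma_{d,n}\ge1$ (embedded in $\bbR^d$ as $L\abs{\ip{e_1}{x}}$; your oscillating trajectory works as well as the paper's idle-then-kick one), the same choice of $k$ with zero-subgradient padding, \Cref{thm:upper} at zero initial distance, and the Zamani--Glineur bound for the logarithmic cap. Your explicit constant comparison and your closedness remark for $A_d^\star$ are welcome additions. The one step to tighten is the external bound: it is stated for a radius $R>0$ with $R\ge\dist(x_1,\cX_\star)$ and normalized step $h=\eta L/R$. So instead of letting the $R$-term simply ``drop out,'' you should apply it to the $n-1$ updates for small $R>0$, in the large-step regime, and then send $R\downarrow0$, as the paper does.
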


\begin{remark}\label{rmk:main}
Several consequences and qualifications are useful.
\begin{enumerate}[label=\Roman*.]
\item\label{rmk:item_normalized}
Suppose $L>0$ and set $D\coloneq\dist(x_1,\cX_\star)>0$.
Writing
\[
    \eta=c\,\frac{D}{L\sqrt n},
    \qquad c>0,
\]
turns \eqref{eq:main-bound} into
\[
    f(x_n)-f_\star
    \le
    \kappa_d(c)\frac{LD}{\sqrt n},
    \qquad
    \kappa_d(c)
    \coloneq
    \left(2d+\frac12\right)c+\frac{1}{2c}.
\]
The function $\kappa_d$ is uniquely minimized at $c_\star=1/\sqrt{4d+1}$.
Thus
\[
    \eta_\star
    =
    \frac{1}{\sqrt{4d+1}}\frac{D}{L\sqrt n}
    \quad\Longrightarrow\quad
    f(x_n)-f_\star
    \le
    \sqrt{4d+1}\,\frac{LD}{\sqrt n}.
\]

\item\label{rmk:item_any}
Fix $0<\underline c\le\overline c<\infty$ and $R>0$.
For trajectories with $\dist(x_{1,n},\cX_\star)\le R$ and constant stepsize
$\eta_n=c_n/\sqrt n$, where $\underline c\le c_n\le\overline c$, \Cref{thm:upper} gives
\[
    f(x_{n,n})-f_\star
    \le
    \left[
        \left(2d+\frac12\right)\overline c L^2
        +
        \frac{R^2}{2\underline c}
    \right]\frac1{\sqrt n}.
\]
Together with Corollary~\ref{cor:sharp-joint}, this shows that the worst-case dimension-horizon multiplier for standard fixed stepsizes is sharp up to universal constants.

\item\label{rmk:item_scope}
The lower bound starts at a minimizer, so it isolates and proves the optimality of the coefficient multiplying $\eta L^2$.
Accordingly, it proves the sharp $d$-dependence for standard fixed-step families and for uniform all-stepsize bounds.
It does not, by itself, prove that the factor $\sqrt d$ in the separately optimized $D$-dependent display of item~\ref{rmk:item_normalized} is minimax optimal.
The only unresolved issue for the uniform coefficient $A_d^\star$ is its exact numerical constant.
\end{enumerate}
\end{remark}

\section{The Upper Bound}\label{sec:upper}
The proof of \Cref{thm:upper} consists of an exact normalization followed by a dimension-dependent bound for normalized trajectories.
We first collect the elementary ingredients.

\begin{lemma}[Ambient subgradients]\label{lem:ambient-subgradients}
Let $F:\bbR^d\to\bbR$ be convex.
Then $\partial F(y)$ is nonempty for every $y\in\bbR^d$.
If, in addition, $F$ is globally $K$-Lipschitz, then every $h\in\partial F(y)$ satisfies $\norm{h}\le K$.
\end{lemma}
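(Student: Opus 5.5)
Nonemptiness is the classical supporting-hyperplane fact for convex functions that are finite on all of $\bbR^d$; I would obtain it directly from a separation argument applied to the epigraph.

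\textbf{Step 1: continuity.} Since $F$ is real-valued on all of $\bbR^d$, it is continuous. Let $y$ lie in the interior of a simplex with vertices $v_0,\dots,v_d$. By convexity, $F$ is bounded above on the simplex by $\max_i F(v_i)$. A bound above on a neighbourhood of $y$ gives local Lipschitz continuity there, by the standard convexity chord argument.

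\textbf{Step 2: separation.} The epigraph $E\coloneq\{(z,t):t\ge F(z)\}$ is convex. By Step~1 its interior is nonempty, since it contains $\{(z,t):t>F(z)\}$. The point $(y,F(y))$ lies on the boundary of $E$. A supporting hyperplane at this point gives $(a,b)\neq(0,0)$ with
\[
    \ip{a}{z}+bt\le\ip{a}{y}+bF(y)\qquad\text{for all }(z,t)\in E.
\]
Letting $t\to\infty$ forces $b\le0$. If $b=0$, then $\ip{a}{z-y}\le0$ for every $z\in\bbR^d$, so $a=0$, which is a contradiction. Hence $b<0$. Setting $h\coloneq-a/b$ and taking $t=F(z)$ yields
\[
    F(z)\ge F(y)+\ip{h}{z-y}\qquad\text{for every }z,
\]
so $h\in\partial F(y)$.

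\textbf{Step 3: norm bound.} Let $h\in\partial F(y)$ with $h\ne0$, and test the subgradient inequality at $z=y+h/\norm{h}$:
\[
    \norm{h}=\ip{h}{z-y}\le F(z)-F(y)\le K\norm{z-y}=K.
\]
If $h=0$ the bound is trivial.

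The only step that needs any care is the existence part, specifically the continuity needed for the epigraph to have nonempty interior so that the supporting hyperplane is non-vertical. In finite dimensions this is standard; one may also simply cite Rockafellar, Theorem~23.4. Under the Lipschitz hypothesis, continuity is immediate and Step~1 can be skipped.
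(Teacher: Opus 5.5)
Your proof is correct and follows the paper's argument essentially line for line. Both take a supporting hyperplane to the epigraph at $(y,F(y))$, exclude $b=0$ because $z$ ranges over all of $\bbR^d$, and get the norm bound by testing the subgradient inequality in the direction $h/\norm{h}$; the paper uses an arbitrary step $t>0$ where you take $t=1$, and your Step~1 spells out the continuity that the paper only asserts.

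One small correction to your closing remark: non-verticality comes from the full domain in your own $b=0$ step, not from the epigraph having nonempty interior. In finite dimensions a supporting hyperplane exists at every boundary point of a convex set anyway, so Step~1 is harmless but not actually needed.
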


The standard proof is included in \Cref{app:useful}.

\begin{lemma}[Projection comparator]\label{lem:projection-comparator}
Let $\cY\subseteq\bbR^d$ be nonempty, closed, and convex.
If $y,z\in\cY$, $h\in\bbR^d$, and
\[
y^+=\Pi_\cY(y-h),
\]
then
\begin{equation}\label{eq:projection-comparator}
\norm{y^+-z}^2
\le
\norm{y-z}^2-2\ip{h}{y-z}+\norm{h}^2.
\end{equation}
If, moreover, $h\in\partial F(y)$ for a convex function $F$ and $\norm{h}\le1$, then
\begin{equation}\label{eq:function-comparator}
\norm{y^+-z}^2
\le
\norm{y-z}^2+1-2\bigl(F(y)-F(z)\bigr).
\end{equation}
\end{lemma}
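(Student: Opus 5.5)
The plan is to combine the standard nonexpansiveness of Euclidean projection onto a closed convex set with a direct expansion of the squared norm, and then to feed in the subgradient inequality of \Cref{def:subgradient}.

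For \eqref{eq:projection-comparator}, we will first use that $z\in\cY$ satisfies $z=\Pi_\cY(z)$ and that $\Pi_\cY$ is $1$-Lipschitz. We will justify the latter from the variational characterization: $p=\Pi_\cY(w)$ if and only if $\ip{w-p}{u-p}\le0$ for all $u\in\cY$. Applying this inequality twice, once for each of $\Pi_\cY(w)$ and $\Pi_\cY(w')$, and adding the two gives $\norm{\Pi_\cY(w)-\Pi_\cY(w')}^2\le\ip{w-w'}{\Pi_\cY(w)-\Pi_\cY(w')}$. Cauchy--Schwarz then yields nonexpansiveness. We will then obtain
\[
\norm{y^+-z}=\norm{\Pi_\cY(y-h)-\Pi_\cY(z)}\le\norm{y-h-z}.
\]
Squaring both sides and expanding $\norm{(y-z)-h}^2=\norm{y-z}^2-2\ip{h}{y-z}+\norm{h}^2$ gives \eqref{eq:projection-comparator}. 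Note that the hypothesis $y\in\cY$ is not even needed for this part; only $z\in\cY$ is used.

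For \eqref{eq:function-comparator}, we will bound the right-hand side of \eqref{eq:projection-comparator} term by term. First, $\norm{h}^2\le1$. Second, the subgradient inequality at $y$, evaluated at the point $z$, gives $F(z)\ge F(y)+\ip{h}{z-y}$. Rearranged, this is $\ip{h}{y-z}\ge F(y)-F(z)$, so $-2\ip{h}{y-z}\le-2\bigl(F(y)-F(z)\bigr)$. Substituting both bounds into \eqref{eq:projection-comparator} gives the claim. The implicit requirement is that $F$ is finite at $y$ and at $z$ (for instance $F:\bbR^d\to\bbR$ as in \Cref{lem:ambient-subgradients}), so that the subgradient inequality makes sense there.

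There is no real obstacle. The only step needing care is justifying nonexpansiveness of $\Pi_\cY$ through the obtuse-angle characterization of the projection. That characterization itself follows from minimizing $\norm{w-p-t(u-p)}^2$ over $t\in[0,1]$ and differentiating at $t=0$.
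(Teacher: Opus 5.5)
Your proposal is correct and follows essentially the same route as the paper: nonexpansiveness of $\Pi_\cY$ (which the paper also derives from the variational inequality, in its appendix), together with $\Pi_\cY(z)=z$, then squaring and expanding. The second inequality then comes from the subgradient inequality at $y$ evaluated at $z$ together with $\norm{h}^2\le1$, exactly as in the paper.
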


\begin{proof}
By non-expansiveness of Euclidean projection and the identity $\Pi_\cY(z)=z$,
\[
\norm{y^+-z}
=
\norm{\Pi_\cY(y-h)-\Pi_\cY(z)}
\le
\norm{y-h-z}.
\]
Squaring and expanding proves \eqref{eq:projection-comparator}.
The subgradient inequality gives
\[
\ip{h}{y-z}\ge F(y)-F(z),
\]
and $\norm{h}^2\le1$, which proves \eqref{eq:function-comparator}.
\end{proof}

\begin{lemma}[Exact normalization]\label{lem:normalization}
Assume $L>0$ and let
\[
D\coloneq\dist(x_1,\cX_\star).
\]
Choose $x_\star\in\cX_\star$ satisfying $\norm{x_1-x_\star}=D$, and define
\[
\cY\coloneq\frac{\cX-x_\star}{\eta L},
\qquad
y_k\coloneq\frac{x_k-x_\star}{\eta L},
\qquad
F(y)\coloneq\frac{f(x_\star+\eta Ly)-f_\star}{\eta L^2}.
\]
Then $\cY$ is nonempty, closed, and convex, $0\in\cY$, and
\[
F(0)=\min_{y\in\cY}F(y)=0.
\]
Moreover, $F$ is convex and $1$-Lipschitz.
For every $1\le k<n$, define
\[
h_k\coloneq\frac{g_k}{L}.
\]
Then
\[
h_k\in\partial F(y_k),
\qquad
\norm{h_k}\le1,
\qquad
y_{k+1}=\Pi_\cY(y_k-h_k).
\]
Finally,
\begin{equation}\label{eq:normalization-identities}
\norm{y_1}=\frac{D}{\eta L},
\qquad
F(y_n)=\frac{f(x_n)-f_\star}{\eta L^2}.
\end{equation}
\end{lemma}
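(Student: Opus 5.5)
The proof is a direct verification of each claim, using only the affine change of variables $x=x_\star+\eta L y$ and the facts that Euclidean projection commutes with translations and positive dilations. Write $T(y)\coloneq x_\star+\eta Ly$. This map is an affine bijection of $\bbR^d$, and it maps $\cY$ onto $\cX$. Hence $\cY$ is nonempty, closed, and convex. Since $x_\star\in\cX$, we have $0\in\cY$.

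For every $y\in\cY$ we have $T(y)\in\cX$, so $F(y)=(f(T(y))-f_\star)/(\eta L^2)\ge0$, with equality at $y=0$ because $f(x_\star)=f_\star$. Thus $F(0)=\min_{\cY}F=0$. Convexity of $F$ follows because $f\circ T$ is a convex function composed with an affine map, and $F$ is a positive multiple of it minus a constant. For the Lipschitz property,
\[
|F(y)-F(y')|\le \frac{L\,\eta L\norm{y-y'}}{\eta L^2}=\norm{y-y'}.
\]

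Next, the subgradients. Take $g_k\in\partial f(x_k)$, so $f(x)\ge f(x_k)+\ip{g_k}{x-x_k}$ for all $x$. Substitute $x=T(y)$ and $x_k=T(y_k)$, so that $x-x_k=\eta L(y-y_k)$. Dividing by $\eta L^2$ gives
\[
F(y)\ge F(y_k)+\ip{g_k/L}{y-y_k},
\]
that is, $h_k\in\partial F(y_k)$. The bound $\norm{h_k}\le1$ follows from \Cref{lem:ambient-subgradients}, applied either to $f$ with $K=L$ or to $F$ with $K=1$.

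The only step requiring a little care is the projection identity. For any $z\in\bbR^d$ we have
\[
\Pi_\cX(T(z))=T\bigl(\Pi_\cY(z)\bigr).
\]
Indeed, $T$ maps $\cY$ bijectively onto $\cX$, and $\norm{T(z)-T(w)}=\eta L\norm{z-w}$. So $w$ minimizes $\norm{z-w}$ over $\cY$ if and only if $T(w)$ minimizes $\norm{T(z)-x}$ over $\cX$, and uniqueness of projections gives the identity.

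Now $x_k-\eta g_k=T(y_k)-\eta L h_k=T(y_k-h_k)$. Therefore
\[
x_{k+1}=\Pi_\cX\bigl(T(y_k-h_k)\bigr)=T\bigl(\Pi_\cY(y_k-h_k)\bigr),
\]
and since $x_{k+1}=T(y_{k+1})$ and $T$ is injective, $y_{k+1}=\Pi_\cY(y_k-h_k)$.

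Finally, the identities \eqref{eq:normalization-identities}. We have $\norm{y_1}=\norm{x_1-x_\star}/(\eta L)=D/(\eta L)$ by the choice of $x_\star$. Such an $x_\star$ exists because $\cX_\star$ is closed and convex: it is a sublevel set of the continuous convex $f$ intersected with $\cX$, and it is nonempty by assumption. We may then take $x_\star=\Pi_{\cX_\star}(x_1)$. The second identity is immediate from the definition, since $F(y_n)=(f(T(y_n))-f_\star)/(\eta L^2)=(f(x_n)-f_\star)/(\eta L^2)$.

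I expect no genuine obstacle. The one point that needs justification rather than mere unwinding is the equivariance of projection under $T$, which is settled by the uniqueness argument above.
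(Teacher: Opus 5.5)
Your proof is correct and follows essentially the same route as the paper. Both arguments pass to the affine change of variables $x=x_\star+\eta L y$, transport the subgradient inequality by substitution, take the norm bound from \Cref{lem:ambient-subgradients}, and use that Euclidean projection commutes with translation and positive scaling. The paper only asserts that last fact, whereas your proof justifies it via the distance-scaling bijection and uniqueness of projections.
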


\begin{proof}
The existence of a nearest minimizer $x_\star$ follows from Proposition~\ref{prop:minimizer-projection}.
The geometric properties of $\cY$, the convexity and Lipschitzness of $F$, and the identities in \eqref{eq:normalization-identities} follow directly from the definitions.
For $z\in\bbR^d$, the subgradient inequality for $g_k\in\partial f(x_k)$ gives
\[
f(x_\star+\eta Lz)
\ge
f(x_k)+\ip{g_k}{x_\star+\eta Lz-x_k}.
\]
Dividing by $\eta L^2$ shows that $g_k/L\in\partial F(y_k)$.
By Lemma~\ref{lem:ambient-subgradients}, $\norm{g_k}\le L$, and therefore $\norm{h_k}\le1$.
Finally, translation and positive scaling commute with Euclidean projection, so
\[
\Pi_\cY\left(y_k-\frac{g_k}{L}\right)
=
\frac{\Pi_\cX(x_k-\eta g_k)-x_\star}{\eta L}
=
y_{k+1}.
\]
\end{proof}

\begin{lemma}[Obtuse vectors in a common open halfspace]\label{lem:rank}
Let $v_1,\dots,v_M\in\bbR^d$.
Suppose there exists $u\in\bbR^d$ such that
\[
\ip{u}{v_k}>0
\qquad
\text{for every }1\le k\le M,
\]
and suppose
\[
\ip{v_k}{v_\ell}<0
\qquad
\text{for every }k\ne\ell.
\]
Then $v_1,\dots,v_M$ are linearly independent.
In particular, $M\le d$.
\end{lemma}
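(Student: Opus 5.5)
The plan is to follow the classical argument that a set of pairwise obtuse vectors lying in an open halfspace is linearly independent. Suppose a linear relation $\sum_{k=1}^M c_k v_k=0$ holds with real coefficients $c_k$. Split the index set into $P\coloneq\{k: c_k>0\}$ and $N\coloneq\{k: c_k<0\}$, discard the indices with $c_k=0$, and rewrite the relation as
\[
w\coloneq\sum_{k\in P} c_k v_k=\sum_{\ell\in N}(-c_\ell)v_\ell ,
\]
where both sides have nonnegative coefficients.

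First I will compute $\norm{w}^2$ by pairing the two expressions:
\[
\norm{w}^2=\sum_{k\in P}\sum_{\ell\in N} c_k(-c_\ell)\ip{v_k}{v_\ell}.
\]
Since $P\cap N=\emptyset$, every pair has $k\ne\ell$, so $\ip{v_k}{v_\ell}<0$. The coefficients $c_k(-c_\ell)$ are strictly positive, so every term is negative and the sum is $\le0$. Hence $\norm{w}^2\le0$, which forces $w=0$. This step only needs the nonstrict version of obtuseness, and it works even when $P$ or $N$ is empty, since then $w=0$ trivially.

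Second, I will use the halfspace condition. From $w=0$ we get $0=\ip{u}{w}=\sum_{k\in P}c_k\ip{u}{v_k}$. Every summand is strictly positive, because $c_k>0$ and $\ip{u}{v_k}>0$, so $P$ must be empty. Applying the same argument to $\sum_{\ell\in N}(-c_\ell)v_\ell=0$ shows that $N$ is empty. Therefore all $c_k=0$, the vectors are linearly independent, and $M\le d$ because $\bbR^d$ contains at most $d$ linearly independent vectors.

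There is no real obstacle here. The only point needing care is the bookkeeping of the sign split, namely that the cross pairs in $\norm{w}^2$ always have distinct indices so that the strict negativity applies. I also note that the halfspace hypothesis is essential: without it, configurations such as $d+1$ vectors of a regular simplex are pairwise obtuse but dependent.
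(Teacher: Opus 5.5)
Your proposal is correct and is essentially the paper's argument: both split a linear relation by coefficient sign, pair the two representations of $w$ to bound $\norm{w}^2$ using obtuseness, and use the halfspace vector $u$ to exclude a one-signed dependence. The only difference is the order: the paper applies $u$ first to make both sign classes nonempty and gets $\norm{w}^2<0$, while you get $w=0$ first (which, as you note, needs only nonpositive inner products) and then apply $u$.
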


\begin{proof}
Suppose that
\[
\sum_{k=1}^M\alpha_kv_k=0
\]
is a nontrivial linear dependence.
Taking the inner product with $u$ shows that the nonzero coefficients cannot all have the same sign.
Thus the sets
\[
P\coloneq\{k:\alpha_k>0\},
\qquad
Q\coloneq\{k:\alpha_k<0\}
\]
are both nonempty.
Set
\[
w\coloneq\sum_{k\in P}\alpha_kv_k
=
\sum_{\ell\in Q}(-\alpha_\ell)v_\ell.
\]
Taking the inner product of these two representations gives
\[
\norm{w}^2
=
\sum_{k\in P}\sum_{\ell\in Q}
\alpha_k(-\alpha_\ell)\ip{v_k}{v_\ell}
<0,
\]
a contradiction.
\end{proof}

\begin{theorem}[Normalized terminal bound]\label{thm:normalized-trajectory}
Let $d\ge1$, let $\cY\subseteq\bbR^d$ be nonempty, closed, and convex, and let $F:\bbR^d\to\bbR$ be convex and $1$-Lipschitz.
Assume
\[
0\in\cY,
\qquad
F(0)=\min_{y\in\cY}F(y)=0.
\]
Let $n\ge1$, let $y_1\in\cY$, and consider arbitrary choices
\[
h_k\in\partial F(y_k),
\qquad
y_{k+1}=\Pi_\cY(y_k-h_k),
\qquad
1\le k<n.
\]
Then
\begin{equation}\label{eq:normalized-bound}
F(y_n)
\le
2d+\frac12+\frac{\norm{y_1}^2}{2n}.
\end{equation}
\end{theorem}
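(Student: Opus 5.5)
We split the bound into two estimates: a telescoping bound on the smallest objective value visited by the trajectory, and a geometric bound on how far the terminal value can rise above that minimum. This is the outline given in the introduction.

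\emph{Step 1 (minimum value).} Apply \eqref{eq:function-comparator} of \Cref{lem:projection-comparator} with $z=0$ at each step $1\le k<n$. Summing gives
\[
0\le\norm{y_n}^2\le\norm{y_1}^2+(n-1)-2\sum_{k<n}F(y_k).
\]
Hence $m\coloneq\min_{1\le k\le n}F(y_k)$ satisfies $m\le\frac12+\frac{\norm{y_1}^2}{2(n-1)}$ for $n\ge2$. To obtain exactly $\frac12+\frac{\norm{y_1}^2}{2n}$, we include the terminal term: since $F(y_n)\ge m$, we can average over $n$ points. Specifically, $\sum_{k\le n}F(y_k)\le\frac{\norm{y_1}^2+(n-1)}{2}+F(y_n)$, and this needs care. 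If the constants do not close exactly, we can use a slightly different split, or treat $F(y_n)\le m+2d$ directly together with this sum. It then suffices to show
\[
F(y_n)-m\le 2d.
\]

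\emph{Step 2 (last exits).} Set $N\coloneq\lceil F(y_n)-m\rceil$, and suppose $F(y_n)>m+2d$. For levels $j=1,\dots,N-1$, let $\tau_j$ be the last index $k<n$ with $F(y_k)\le F(y_n)-j$. Such an index exists because the minimizer of $F$ along the trajectory is below these levels. After $\tau_j$, every iterate has value above $F(y_n)-j$, and the index $\tau_j$ is nonincreasing in $j$. Define $v_j\coloneq y_n-y_{\tau_j}$. Take $u$ to be any subgradient of $F$ at $y_n$. The subgradient inequality gives $\ip{u}{v_j}\ge F(y_n)-F(y_{\tau_j})\ge j>0$, so all $v_j$ lie in a common open halfspace.

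\emph{Step 3 (obtuseness), the main obstacle.} For $j\ge i+2$, we must show $\ip{v_i}{v_j}<0$. The idea is to apply \eqref{eq:projection-comparator} with comparator $z=y_{\tau_j}$, which lies in $\cY$, along the segment of the trajectory from $\tau_i$ to $n$. Every iterate in that window has value above $F(y_n)-i\ge F(y_{\tau_j})+2$. Each step therefore decreases $\norm{\cdot-y_{\tau_j}}^2$ by at least $2\cdot2-1=3$. Telescoping from $\tau_i$ to $n$ gives $\norm{y_n-y_{\tau_j}}^2\le\norm{y_{\tau_i}-y_{\tau_j}}^2-3(n-\tau_i)$, so
\[
\norm{v_j}^2<\norm{v_j-v_i}^2,
\]
which yields $2\ip{v_i}{v_j}<\norm{v_i}^2$. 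This is not yet negative. To reach negativity, we would use the sharper comparator inequality together with the single step out of $y_{\tau_i}$: at that point $F(y_{\tau_i})\le F(y_n)-i$ is still at least $F(y_{\tau_j})+2$. The margin of $2$ in levels should provide the slack to absorb $\norm{v_i}^2$, possibly by comparing with $y_{\tau_i}$ as the comparator as well. We expect the exact inequality here to be the crux, and that the gap of two levels is precisely what makes it strict.

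\emph{Step 4 (counting).} Once Step 3 holds, split the level indices $1,\dots,N-1$ into two parity classes. Within each class, distinct indices differ by at least $2$, so the vectors are pairwise obtuse and lie in the open halfspace from Step 2. \Cref{lem:rank} then limits each class to at most $d$ vectors. Hence $N-1\le 2d$, which gives $F(y_n)-m\le 2d$ up to the rounding in the definition of the levels. Combining this with Step 1 yields \eqref{eq:normalized-bound}.
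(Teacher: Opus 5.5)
Your outline (telescoping for the minimum, last exits, a common halfspace from a terminal subgradient, parity split plus \Cref{lem:rank}) is the paper's. Your comparator and window in Step 3 also match the paper's: you compare against the lower, earlier exit $y_{\tau_j}$ over the window starting at the later exit $\tau_i$. The gap is that Step 3, the crux, is left unproved, and it only looked out of reach because you dropped the term that closes it. Telescoping \eqref{eq:function-comparator} with comparator $y_{\tau_j}$ over the $T_i=n-\tau_i$ updates $k=\tau_i,\dots,n-1$ decreases the squared distance by more than $1$ per step. Hence $\norm{v_j}^2<\norm{v_j-v_i}^2-T_i$, i.e.\ $2\ip{v_i}{v_j}<\norm{v_i}^2-T_i$, not merely $<\norm{v_i}^2$.

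The missing ingredient is the companion estimate $\norm{v_i}^2\le T_i$. Apply \eqref{eq:function-comparator} with comparator $y_{\tau_i}$ over the same window. The step at $k=\tau_i$ has zero gap. Every later step has positive gap, since after its last exit the trajectory stays strictly above $F(y_n)-i\ge F(y_{\tau_i})$. So the squared distance grows by at most $1$ per step, and combining the two estimates gives $2\ip{v_i}{v_j}<0$. Also, your claim that every iterate in the window lies above $F(y_n)-i$ fails at $k=\tau_i$ itself. There you need $F(y_{\tau_i})>F(y_n)-i-1$, which follows from the strict ordering $\tau_{i+1}<\tau_i$. That ordering requires the unit-increment property $\abs{F(y_{k+1})-F(y_k)}\le\norm{h_k}\le1$, not just monotonicity of $\tau_j$.

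Two quantitative points also miss the stated constants. First, your levels $F(y_n)-j$, $j=1,\dots,N-1$, produce only $\lceil q\rceil-1$ vectors, where $q=F(y_n)-m$. The parity count then gives $\lceil q\rceil-1\le2d$, so $q$ is bounded only by $2d+1$; the ``rounding'' costs a full unit. The paper anchors the levels at the bottom, $m+j$ for $j=0,\dots,\lceil q\rceil-1$. This gives $\lceil q\rceil$ vectors with halfspace margin $\ip{h_n}{v_j}\ge q-j>0$, hence $q\le\lceil q\rceil\le2d$. Second, your Step 1 yields only $\norm{y_1}^2/(2(n-1))$, and the proposed bookkeeping with $+F(y_n)$ does not close. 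The fix is to take $h_n\in\partial F(y_n)$, define a virtual point $y_{n+1}=\Pi_\cY(y_n-h_n)$, and sum \eqref{eq:function-comparator} with $z=0$ for $k=1,\dots,n$. Using $\norm{y_{n+1}}^2\ge0$, this gives $2\sum_{k=1}^nF(y_k)\le\norm{y_1}^2+n$ and also covers $n=1$. The same $h_n$ serves as the halfspace normal.
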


\begin{proof}
Write $\phi_k\coloneq F(y_k)$.
By \Cref{lem:ambient-subgradients}, choose any $h_n\in\partial F(y_n)$ and define the point
\[
    y_{n+1}\coloneq\Pi_\cY(y_n-h_n).
\]

\paragraph{A low level visited by the trajectory.}
Applying \eqref{eq:function-comparator} with comparator $z=0$ for $k=1,\dots,n$ gives
\[
    \norm{y_{k+1}}^2
    \le
    \norm{y_k}^2+1-2F(y_k).
\]
After summation,
\begin{equation}\label{eq:normalized-average}
    2\sum_{k=1}^nF(y_k)
    \le
    \norm{y_1}^2+n.
\end{equation}
Set
\[
    m\coloneq\min_{1\le k\le n}\phi_k,
    \qquad
    H\coloneq\phi_n.
\]
Then \eqref{eq:normalized-average} implies
\begin{equation}\label{eq:min-value-bound}
    m
    \le
    \frac12+\frac{\norm{y_1}^2}{2n}.
\end{equation}
If $H=m$, this already proves \eqref{eq:normalized-bound}, including the case $n=1$.
Henceforth assume $H>m$.

\paragraph{Last exits from consecutive levels.}
Put
\[
    q\coloneq H-m>0,
    \qquad
    J\coloneq\lceil q\rceil-1,
\]
so that
\begin{equation}\label{eq:q-J}
    J<q\le J+1.
\end{equation}
For every $0\le j\le J$, define the last exit from the level $m+j$ by
\[
    \tau_j
    \coloneq
    \max\{k\in\{1,\dots,n\}:\phi_k\le m+j\},
    \qquad
    p_j\coloneq y_{\tau_j}.
\]
Each defining set is nonempty because an iterate attaining $m$ belongs to it.
Moreover, $H>m+J\ge m+j$, so $\tau_j<n$.
By projection non-expansiveness and \Cref{lem:ambient-subgradients},
\[
    \norm{y_{k+1}-y_k}
    \le
    \norm{h_k}
    \le1.
\]
Since $F$ is $1$-Lipschitz,
\begin{equation}\label{eq:unit-increments}
    \abs{\phi_{k+1}-\phi_k}\le1.
\end{equation}
For $j<J$, maximality of $\tau_j$ gives $\phi_{\tau_j+1}>m+j$, whereas \eqref{eq:unit-increments} gives
\[
    \phi_{\tau_j+1}
    \le
    \phi_{\tau_j}+1
    \le
    m+j+1.
\]
Thus $\tau_j+1$ belongs to the set defining $\tau_{j+1}$, and hence
\begin{equation}\label{eq:ordered-exits}
    \tau_0<\tau_1<\cdots<\tau_J.
\end{equation}

\paragraph{Obtuse displacement vectors.}
Define
\[
    T_j\coloneq n-\tau_j\ge1,
    \qquad
    v_j\coloneq y_n-p_j.
\]
For every $k>\tau_j$, last-exit maximality gives
\begin{equation}\label{eq:above-level}
    F(y_k)>m+j\ge F(p_j).
\end{equation}
Apply \eqref{eq:function-comparator} with the fixed comparator $p_j$ and sum from $k=\tau_j$ to $n-1$.
The term at $k=\tau_j$ has zero function gap, so
\begin{equation}\label{eq:vj-norm}
    \norm{v_j}^2
    \le
    T_j-2\sum_{k=\tau_j+1}^{n-1}\bigl(F(y_k)-F(p_j)\bigr)
    \le
    T_j.
\end{equation}
Now let $0\le i\le j-2\le J-2$.
By \eqref{eq:ordered-exits}, $\tau_j>\tau_{i+1}$.
Thus every $k=\tau_j,\dots,n-1$ lies after the last visit below $m+i+1$, and
\[
    F(y_k)>m+i+1\ge F(p_i)+1.
\]
Equation \eqref{eq:function-comparator}, now with comparator $p_i$, gives
\[
    \norm{y_{k+1}-p_i}^2
    <
    \norm{y_k-p_i}^2-1,
    \qquad
    k=\tau_j,\dots,n-1.
\]
Summing exactly $T_j$ updates yields
\begin{equation}\label{eq:cross-comparator}
    \norm{y_n-p_i}^2
    <
    \norm{p_j-p_i}^2-T_j.
\end{equation}
Since $p_j-p_i=v_i-v_j$, expansion and cancellation in \eqref{eq:cross-comparator} give
\begin{equation}\label{eq:obtuse}
    2\ip{v_i}{v_j}
    <
    \norm{v_j}^2-T_j
    \le0.
\end{equation}
Thus displacement vectors whose indices differ by at least two are strictly obtuse.

\paragraph{A common halfspace and the dimension count.}
The appended terminal subgradient places all displacement vectors in one common open halfspace.
Indeed, convexity at $y_n$ gives
\[
    F(p_j)
    \ge
    H+\ip{h_n}{p_j-y_n},
\]
and therefore
\begin{equation}\label{eq:common-halfspace}
    \ip{h_n}{v_j}
    \ge
    H-F(p_j)
    \ge
    q-j
    >0.
\end{equation}
Split the indices $0,\dots,J$ by parity.
Distinct indices in either parity class differ by at least two, so \eqref{eq:obtuse}, \eqref{eq:common-halfspace}, and \Cref{lem:rank} show that the corresponding vectors are linearly independent in $\bbR^d$.
Each parity class therefore contains at most $d$ indices, and
\begin{equation}\label{eq:number-levels}
    J+1\le2d.
\end{equation}
Combining \eqref{eq:q-J} and \eqref{eq:number-levels} gives
\[
    H-m=q\le J+1\le2d.
\]
Finally, \eqref{eq:min-value-bound} yields
\[
    F(y_n)=H
    \le
    2d+m
    \le
    2d+\frac12+\frac{\norm{y_1}^2}{2n},
\]
which proves \eqref{eq:normalized-bound}.
\end{proof}

\subsection{Proof of the Upper Bound}\label{sec:upper-proof}
\begin{proof}[Proof of \Cref{thm:upper}]
If $L=0$, then global $0$-Lipschitzness makes $f$ constant, and the conclusion is immediate.
Assume $L>0$ and apply Lemma~\ref{lem:normalization}.
By \Cref{thm:normalized-trajectory} and \eqref{eq:normalization-identities},
\[
\frac{f(x_n)-f_\star}{\eta L^2}
=
F(y_n)
\le
2d+\frac12
+
\frac{1}{2n}\frac{\dist(x_1,\cX_\star)^2}{\eta^2L^2}.
\]
Multiplying by $\eta L^2$ proves \eqref{eq:main-bound}.
\end{proof}

\section{The Matching Lower Bound}\label{sec:lower}
We now prove \Cref{thm:lower} and \Cref{cor:sharp-joint}.
The lower bound is unconstrained and uses a support function of finitely many vectors.
The construction is easiest to understand in two steps: first, a purely combinatorial sequence generates a subgradient trajectory automatically; second, a multiscale Gram design produces a sequence of linear terminal depth.

\subsection{Prefix-minimizing sequences and support-function trajectories}\label{sec:prefix-minimizing}

\begin{definition}[Prefix-minimizing sequence]\label{def:prefix-minimizing}
Let $\mathcal A$ be a finite set of vectors in a Euclidean space with $0\in\mathcal A$, and let
$a_1,\ldots,a_{T+1}\in\mathcal A$.
Write
\[
    S_t\coloneq\sum_{s<t}a_s,
    \qquad
    1\le t\le T+1.
\]
We call $a_1,\ldots,a_{T+1}$ a \emph{prefix-minimizing sequence} if
\begin{equation}\label{eq:prefix-minimizing}
    a_t\in\argmin_{a\in\mathcal A}\ip{S_t}{a}
    \qquad
    \text{for every }1\le t\le T+1.
\end{equation}
The first $T$ actions generate updates; $a_{T+1}$ is queried only at the terminal point.
Since $0\in\mathcal A$, every selected score is automatically nonpositive.
\end{definition}

The terminology refers to the fact that each vector minimizes the linear score induced by the cumulative preceding prefix.
Equivalently, at the self-generated point $y_t=-S_t$, the vector $a_t$ is an active linear piece of the support function of $\mathcal A$.

\begin{lemma}[Support-function realization]\label{lem:support-realization}
Let $\mathcal A$ be a finite subset of the Euclidean unit ball containing $0$, and let
$a_1,\ldots,a_{T+1}$ be a prefix-minimizing sequence.
Define
\[
    F(y)\coloneq\max_{a\in\mathcal A}\ip{a}{y},
    \qquad
    y_t\coloneq-S_t.
\]
Then $F$ is convex, globally $1$-Lipschitz, and satisfies $F(0)=\min F=0$.
Moreover,
\begin{equation}\label{eq:support-trajectory}
    a_t\in\partial F(y_t),
    \qquad
    y_{t+1}=y_t-a_t,
    \qquad
    1\le t\le T,
\end{equation}
and the terminal value is
\begin{equation}\label{eq:support-terminal}
    F(y_{T+1})
    =
    -\ip{S_{T+1}}{a_{T+1}}.
\end{equation}
\end{lemma}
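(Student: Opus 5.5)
The argument is a direct verification from the definitions; no earlier result beyond the basic facts on support functions is needed.

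\emph{Properties of $F$.} $F$ is a maximum of finitely many linear functions, so it is convex and finite on all of $\bbR^d$. For any $y,z$ and any maximizer $a$ at $y$,
\[
F(y)-F(z)\le \ip{a}{y-z}\le \norm{y-z},
\]
because $\norm{a}\le1$. Exchanging $y$ and $z$ shows that $F$ is globally $1$-Lipschitz. Since $0\in\mathcal A$, we have $F(y)\ge\ip{0}{y}=0$ for every $y$, and $F(0)=0$. Hence $F(0)=\min F=0$.

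\emph{Subgradient property.} Fix $1\le t\le T+1$ (the range needed for \eqref{eq:support-trajectory} is $t\le T$). At $y_t=-S_t$, the prefix-minimizing condition \eqref{eq:prefix-minimizing} gives $\ip{a_t}{y_t}=-\ip{S_t}{a_t}=\max_{a\in\mathcal A}\ip{a}{y_t}=F(y_t)$, since minimizing $\ip{S_t}{a}$ is the same as maximizing $\ip{a}{-S_t}$. Then for every $z$,
\[
F(z)\ge\ip{a_t}{z}=F(y_t)+\ip{a_t}{z-y_t},
\]
so $a_t\in\partial F(y_t)$.

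\emph{Recursion and terminal value.} By definition of the partial sums, $S_{t+1}=S_t+a_t$, so $y_{t+1}=-S_{t+1}=y_t-a_t$ for $1\le t\le T$. This is the unconstrained constant-step update with unit step, as required by \eqref{eq:support-trajectory}. Applying the identity $F(y_t)=-\ip{S_t}{a_t}$ from the subgradient step at $t=T+1$ yields \eqref{eq:support-terminal}.

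The only point that needs care is the sign convention: $y_t=-S_t$ turns the minimization of $\ip{S_t}{a}$ into the maximization defining $F$. Everything else is routine.
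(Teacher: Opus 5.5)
Your proof is correct and follows the paper's argument essentially line by line. In both, the substitution $y_t=-S_t$ turns the prefix-minimizing condition into activeness of $a_t$ in the maximum defining $F$; this gives the subgradient inclusion and, at $t=T+1$, the terminal identity. Your one-line Lipschitz bound via a maximizer at $y$ simply spells out what the paper asserts directly from $\norm{a}\le1$.
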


\begin{proof}
A maximum of finitely many linear functions is convex.
Since every $a\in\mathcal A$ has norm at most one, $F$ is globally $1$-Lipschitz.
The inclusion $0\in\mathcal A$ gives $F\ge0$ and $F(0)=0$.
For every $t$, \eqref{eq:prefix-minimizing} is equivalent to
\[
    a_t\in\argmax_{a\in\mathcal A}\ip{a}{-S_t}
    =
    \argmax_{a\in\mathcal A}\ip{a}{y_t}.
\]
Hence $a_t$ is active in the definition of $F(y_t)$.
For every $z$,
\[
    F(z)
    \ge
    \ip{a_t}{z}
    =
    F(y_t)+\ip{a_t}{z-y_t},
\]
so $a_t\in\partial F(y_t)$.
The update identity in \eqref{eq:support-trajectory} follows from the definition of $S_t$, and \eqref{eq:support-terminal} follows from the same active-set identity at time $T+1$.
\end{proof}

The lower bound is therefore reduced to constructing a prefix-minimizing sequence with large terminal depth and few ambient dimensions.

\begin{theorem}[Linear-depth prefix-minimizing sequence]\label{thm:prefix-minimizing}
For every integer $k\ge1$, there exist a finite set $\mathcal A_k\subseteq\bbR^{2k+1}$ containing $0$ and exactly $2k+1$ nonzero vectors of norm at most one, an integer $T_k<1000^{k+1}$, and a prefix-minimizing sequence
\[
    a_1,\ldots,a_{T_k+1}\in\mathcal A_k\setminus\{0\}
\]
such that
\begin{equation}\label{eq:prefix-minimizing-depth}
    -\ip{S_{T_k+1}}{a_{T_k+1}}
    =
    \frac{k}{4}.
\end{equation}
\end{theorem}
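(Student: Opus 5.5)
I would prove \Cref{thm:prefix-minimizing} by induction on $k$. The inductive hypothesis is strengthened so that it records the full Gram matrix of $\mathcal A_k\setminus\{0\}$, the final prefix sum $S_{T_k+1}$, and a slack margin in every prefix-minimizing inequality. Two new vectors $b_k,c_k$ are added at stage $k+1$; these are the two extra dimensions. The new trajectory runs at a coarser scale: the old sequence is repeated many times (a factor of order $1000$ in length), and the runs are interleaved with blocks of the new vectors. The Gram matrix is chosen rational and positive definite, so the vectors can be realized in $\bbR^{2k+3}$ after scaling to norm at most one. The base case $k=1$ in $\bbR^3$ is a small explicit computation. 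In it, a vector $a$ is played repeatedly until a second vector $b$, obtuse to $a$, becomes the argmin. One then checks that the terminal score $-\ip{S_{T+1}}{a_{T+1}}$ equals $1/4$, using the fact that every score is nonpositive because $0\in\mathcal A$.

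The key mechanism I aim for is a \emph{drift-and-return} structure. Each stage is a closed loop whose total displacement is nearly orthogonal to all earlier vectors. A new pair $(b,c)$ with $\ip{b}{c}<0$ is chosen so that the prefix sum drifts in the $b$ direction for about $M$ steps, where $M$ is large, and then $c$ takes over and brings the sum back. The inner products of $b$ and $c$ with the old vectors are set to a tiny rational $\epsilon$ of order $1000^{-k}$. The point is that when the old sequence is replayed from a shifted prefix $S+\lambda b$, its argmin choices are unchanged: the shift perturbs old scores by at most $\lambda\epsilon$, which is smaller than the old slack margin. Conversely, the accumulated old prefix sums, which are bounded by roughly $1000^{k}$ in norm, perturb the new scores by an amount negligible compared with the new scale. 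This two-sided scale separation is what makes the horizon grow like $1000^{k+1}$. The terminal action is chosen so that the final score is the old depth $k/4$ plus an exact extra $1/4$ from the new stage. Rationality lets this be an exact equality rather than approximate.

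The main obstacle is the bookkeeping of these uniform margins. We must verify at every time $t$ that the scheduled vector minimizes $\ip{S_t}{a}$ over all $2k+3$ nonzero vectors and $0$. This includes ties, where argmin membership is allowed, and it includes the constraint that no previously unused vector becomes strictly better. My first attempt would be an explicit block Gram matrix in which the new pair is orthogonal to the old ones up to $\epsilon$. Positive definiteness would follow from a Schur complement bound, since the old block is positive definite with a smallest eigenvalue controlled inductively. The inequality checks would then be split into three cases: old vectors during old phases, new vectors during old phases, and all vectors during new phases. Each case would be bounded by (margin) minus (cross-scale perturbation), and the constant $1000$ would be chosen to cover the worst of these.
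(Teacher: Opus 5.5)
The inductive step has a genuine gap. Your scale-separation intuition is in the right spirit, but the engine of the step is replaying the old sequence from a shifted prefix and absorbing the shift into a slack margin, and this cannot work as described. There are three problems.

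First, a prefix-minimizing sequence cannot have slack in every inequality. At $t=1$ one has $S_1=0$, so every action, including $0$, is tied. The paper's construction is full of such ties: at the start of stage $j$ the scheduled $u_j$ is tied with $v_j$, with every later action, and with the witness $z$. On tied sets, (margin) minus (perturbation) certifies nothing; one needs exact equalities.

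Second, from the second run on, the shift is not just $\lambda b$. It also contains the net displacement $S_{\mathrm{old}}$ of the earlier run(s). Its inner products with the old actions are of order one and depend on the action: $-k/4$ on the old witness, $0$ on the zero action. At the first step of a replay the old witness already scores about $-k/4$. So $a_1$ is admissible only if $\ip{S_{\mathrm{old}}}{a_1}\approx-k/4$, which nothing in your hypothesis provides.

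Third, suppose $R$ faithful replays of the $T_k$ old updates did occur. Then the witness score would be about $-Rk/4$. Since $Rk/4>4k+\tfrac{13}{2}=2d+\tfrac12$ (with $d=2k+3$) as soon as $R>16+26/k$, this contradicts \Cref{thm:upper} applied from a minimizer at an intermediate horizon. So a replay factor of order $1000$ is impossible, not merely unproven.

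The drift-and-return micro-mechanism fails similarly. Playing $b$ for $M$ consecutive steps requires $\ip{S}{b}\le-(M-1)\norm{b}^2$ beforehand, because each play raises $b$'s own score by $\norm{b}^2$ while $0$ stays at score $0$. This is also why $a$ cannot be played repeatedly from $S_1=0$ in your base case. Moreover, a loop that brings the sum back leaves no net displacement and so adds no depth.

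What is missing is the paper's mechanism, which is sequential rather than nested and exact rather than perturbative. Stages run one after another, finest scale first. Stage $j$ alternates a nearly antipodal pair with $\norm{u_j+v_j}^2=\varepsilon_j$, and after each $u_j$ the vector $v_j$ is the unique minimizer. The cross Gram entries are exact: for $i<j$, $\ip{u_i}{u_j}=\ip{u_i}{v_j}=0$ and $\ip{v_i}{u_j}=\ip{v_i}{v_j}=-\varepsilon_i$. Hence the small, non-returning drift $w_i=u_i+v_i$ has inner product exactly $-\varepsilon_i$ with every later action and with $z=u_{k+1}$. All not-yet-used actions therefore stay exactly tied at a common score, which drops by exactly $N_j\varepsilon_j=\tfrac14$ per stage; this is where the depth comes from. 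Later drifts leave past $u_i$ scores unchanged. They lower past $v_i$ by $2\varepsilon_i$ per pair, at most $2\delta/(Q-1)$ in total because $N_{i+1}=N_i/Q$ and $\varepsilon_{i+1}=Q\varepsilon_i$, which is well below the margin $3\delta$. The scales depend on $k$ through $N_i=100m^2Q^{m-i}$. So passing from $k$ to $k+1$ \emph{refines} every old stage, with each pair repeated about $Q$ times more often and $\varepsilon_i$ about $Q$ times smaller, rather than replaying the old sequence as a block. Positive definiteness is checked in the coordinates $u_i,\ (u_i+v_i)/\sqrt{\varepsilon_i}$. There the nearly singular pair blocks disappear and a diagonal-dominance argument applies.
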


\subsection{A multiscale Gram construction}
\label{sec:gram-construction}

Fix an integer $k\ge1$ and put
\[
    m\coloneq k+1.
\]
The first $k$ indices will correspond to the $k$ stages of the
lower-bound trajectory.
The additional index $m=k+1$ will be used to construct a terminal
witness.

The purpose of this subsection is to design the geometry required by
the construction.
At this stage, we do not yet choose actual Euclidean vectors.
Instead, we first introduce formal symbols
\[
    u_1,v_1,\ldots,u_m,v_m
\]
and prescribe their desired pairwise inner products.
We will then prove that the resulting candidate Gram matrix is positive
definite.
Only after this consistency check will we identify these formal symbols with actual vectors in a Euclidean space.

\paragraph{Choice of scales.}
Set
\begin{equation}\label{eq:lower-parameters}
    Q\coloneq100,
    \qquad
    \delta\coloneq\frac14,
    \qquad
    N_i\coloneq100m^2Q^{m-i},
    \qquad
    \varepsilon_i\coloneq\frac{\delta}{N_i},
    \qquad
    1\le i\le m.
\end{equation}
The quantities $N_i$ are positive integers, and
\begin{equation}\label{eq:lower-scale-identities}
    N_i\varepsilon_i=\delta,
    \qquad
    N_{i+1}=\frac{N_i}{Q},
    \qquad
    \varepsilon_{i+1}=Q\varepsilon_i,
    \qquad
    1\le i<m.
\end{equation}
Thus, as the stage index increases, the number of repetitions decreases
geometrically, whereas the interaction scale $\varepsilon_i$ increases
geometrically.
Their product nevertheless remains constant:
every completed stage will contribute the same quantity
\[
    N_i\varepsilon_i=\delta
\]
to the final score.

The intended behavior of stage $i$ is to alternate a nearly antipodal
pair $(u_i,v_i)$.
Once the vectors have been realized, we will write
\[
    w_i\coloneq u_i+v_i
\]
for the net displacement produced by one complete pair.
The Gram matrix below is designed so that $w_i$ is small, while the
different scales interact in a highly asymmetric way.
This scale separation ensures that actions from completed stages never
obtain a score strictly below the prescribed next action. Some of them
may remain tied with it, which is why the construction uses an
admissible tie-breaking choice.

\paragraph{The candidate Gram matrix.}
Consider the ordered list of formal symbols
\[
    (u_1,v_1,u_2,v_2,\ldots,u_m,v_m).
\]
We define a symmetric $2m\times2m$ matrix $\mathbf G$, indexed by this
list, by specifying its $2\times2$ blocks.

For every $1\le i\le m$, the diagonal block corresponding to
$(u_i,v_i)$ is
\begin{equation}\label{eq:lower-diag}
    \mathbf G_{ii}
    =
    \begin{pmatrix}
        1-3\varepsilon_i & -(1-2\varepsilon_i)\\
        -(1-2\varepsilon_i) & 1
    \end{pmatrix}.
\end{equation}
In other words, the desired within-stage inner products are
\begin{equation}\label{eq:lower-diag-unpacked}
    \ip{u_i}{u_i}=1-3\varepsilon_i,
    \qquad
    \ip{v_i}{v_i}=1,
    \qquad
    \ip{u_i}{v_i}=-(1-2\varepsilon_i).
\end{equation}
In particular, if vectors satisfying these identities exist, then
\begin{equation}\label{eq:lower-drift-norm}
    \norm{u_i+v_i}^2
    =
    \norm{u_i}^2+\norm{v_i}^2+2\ip{u_i}{v_i}
    =
    \varepsilon_i.
\end{equation}
Thus $u_i$ and $v_i$ are nearly antipodal, and one complete pair
produces a drift of norm $\sqrt{\varepsilon_i}$.

For $1\le i<j\le m$, define the off-diagonal block with rows indexed by
$(u_i,v_i)$ and columns indexed by $(u_j,v_j)$ as
\begin{equation}\label{eq:lower-cross}
    \mathbf G_{ij}
    =
    \begin{pmatrix}
        0&0\\
        -\varepsilon_i&-\varepsilon_i
    \end{pmatrix},
    \qquad
    \mathbf G_{ji}=\mathbf G_{ij}^{\mathsf T}.
\end{equation}
Equivalently, whenever $i<j$, the desired cross-stage inner products
are
\begin{equation}\label{eq:lower-cross-unpacked}
    \ip{u_i}{u_j}
    =
    \ip{u_i}{v_j}
    =
    0,
    \qquad
    \ip{v_i}{u_j}
    =
    \ip{v_i}{v_j}
    =
    -\varepsilon_i.
\end{equation}

At this point, the symbols $u_i$ and $v_i$ are not yet vectors:
Equations~\eqref{eq:lower-diag} and~\eqref{eq:lower-cross} just
prescribe the inner products that we want them to have.
The following lemma establishes that these prescriptions are mutually
compatible.

\begin{lemma}[Positive-definite Gram family]
\label{lem:lower-pd}
The matrix $\mathbf G$ is positive definite.
Deleting the row and column indexed by $v_m$ leaves a positive-definite
principal matrix of rank
\[
    2m-1=2k+1.
\]
Moreover, every diagonal entry of this principal matrix is at most one.
\end{lemma}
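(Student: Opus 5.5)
The plan is to prove positive definiteness of $\mathbf G$ by a congruence that makes it a small perturbation of the identity, and then to read off the remaining two claims directly. Congruence preserves positive definiteness. So it suffices to exhibit an invertible change of variables $\mathbf G\mapsto \mathbf P^{\mathsf T}\mathbf G\mathbf P$ whose result is strictly diagonally dominant with positive diagonal. Gershgorin's theorem then finishes.

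\emph{Step 1: passing to the drift basis.} Within each block, replace the formal pair $(u_i,v_i)$ by $(w_i,v_i)$ with $w_i=u_i+v_i$, computing inner products formally and bilinearly from \eqref{eq:lower-diag-unpacked} and \eqref{eq:lower-cross-unpacked}. This is a block-triangular unimodular change of basis, hence a congruence. From \eqref{eq:lower-diag-unpacked} one gets
\[
\ip{w_i}{w_i}=\varepsilon_i,\qquad \ip{w_i}{v_i}=2\varepsilon_i,\qquad \ip{v_i}{v_i}=1 .
\]
From \eqref{eq:lower-cross-unpacked}, for $i<j$, one gets
\[
\ip{w_i}{w_j}=-2\varepsilon_i,\qquad \ip{w_i}{v_j}=-\varepsilon_i,\qquad \ip{v_i}{w_j}=-2\varepsilon_i,\qquad \ip{v_i}{v_j}=-\varepsilon_i .
\]

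\emph{Step 2: rescaling and dominance.} Next rescale $\tilde w_i\coloneq w_i/\sqrt{\varepsilon_i}$, which is another congruence by a positive diagonal matrix. All diagonal entries become $1$. The off-diagonal entries become:
\begin{itemize}[label={}]
\item within a block, $\ip{\tilde w_i}{v_i}=2\sqrt{\varepsilon_i}$;
\item across blocks with $i<j$, $\ip{\tilde w_i}{\tilde w_j}=-2\sqrt{\varepsilon_i/\varepsilon_j}=-2Q^{-(j-i)/2}=-2\cdot 10^{-(j-i)}$ by \eqref{eq:lower-scale-identities};
\item across blocks with $i<j$, $\ip{\tilde w_i}{v_j}=-\sqrt{\varepsilon_i}$, $\ip{v_i}{\tilde w_j}=-2\varepsilon_i/\sqrt{\varepsilon_j}$ (of absolute value at most $2\sqrt{\varepsilon_i}$, since $\varepsilon_j>\varepsilon_i$), and $\ip{v_i}{v_j}=-\varepsilon_i$.
\end{itemize}
Every $\varepsilon_i\le\varepsilon_m=\delta/(100m^2)$, so each entry carrying a $\sqrt{\varepsilon}$ or $\varepsilon$ factor is at most $O(1/m)$ in size. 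At most $2m$ such entries appear in any row, so they contribute only a small absolute constant in total (below $0.2$, say). The $\tilde w$--$\tilde w$ entries contribute at most $2\cdot 2\sum_{l\ge1}10^{-l}=4/9$ per row. Hence every off-diagonal absolute row sum is strictly less than $1$, and Gershgorin gives positive definiteness of the transformed matrix, hence of $\mathbf G$.

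\emph{Step 3: the remaining claims.} Any principal submatrix of a positive-definite matrix is positive definite. Deleting the $v_m$ row and column therefore leaves a positive-definite $(2m-1)\times(2m-1)$ matrix, whose rank is $2m-1=2k+1$. Its diagonal entries are the original ones $1-3\varepsilon_i$ and $1$, all at most one.

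The main obstacle is the bookkeeping in Step 2. One must check that the cross-scale entries between $v_i$ and $\tilde w_j$, which involve $\varepsilon_i/\sqrt{\varepsilon_j}$, remain small in both orderings of $i$ and $j$. One must also confirm that the constants $Q=100$ and $N_i=100m^2Q^{m-i}$ leave enough margin, uniformly in $m$, for the row-sum bound.
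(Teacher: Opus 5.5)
Your proof is correct and follows essentially the same route as the paper: a congruence to the rescaled drift coordinates $w_i/\sqrt{\varepsilon_i}$, in which the drift--drift entries become $-2\cdot 10^{-|i-j|}$ (absolute row sum at most $4/9$) and every other coupling is $O(\sqrt{\varepsilon_m})=O(1/m)$. The differences are cosmetic. The paper keeps $u_i$, so the complementary block is diagonal, and it bounds the coupling block in Frobenius norm before a quadratic-form estimate. You keep $v_i$ and apply Gershgorin to the whole matrix; this works because each $\varepsilon$-type entry is at most $2\sqrt{\varepsilon_m}=1/(10m)$ and there are at most $2m-1$ of them per row, which makes your ``below $0.2$'' explicit.
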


The proof is deferred to \Cref{app:gram-pd}.

Lemma~\ref{lem:lower-pd} is the point at which existence of the desired
geometry is established.
Indeed, every positive-semidefinite matrix is the Gram matrix of a
family of Euclidean vectors, and positive definiteness additionally
implies that these vectors may be chosen linearly independent.
We will apply this fact only after discarding the auxiliary symbol
$v_m$.

\subsection{The stage invariant}
\label{sec:stage-invariant}

\paragraph{From the Gram matrix to actual vectors.}
Let $\widehat{\mathbf G}$ be the principal submatrix of $\mathbf G$
obtained by deleting the row and column indexed by $v_m$.
Thus $\widehat{\mathbf G}$ is indexed by
\[
    u_1,v_1,\ldots,u_k,v_k,u_m
\]
and has size
\[
    2m-1=2k+1.
\]
By \Cref{lem:lower-pd}, the matrix $\widehat{\mathbf G}$ is positive
definite.
Consequently, it admits a factorization
\[
    \widehat{\mathbf G}=R^{\mathsf T}R
\]
with $R\in\bbR^{(2k+1)\times(2k+1)}$ invertible; for example, one may
take a Cholesky factorization.

We now choose the vectors
\[
    u_1,v_1,\ldots,u_k,v_k,u_m\in\bbR^{2k+1}
\]
to be the columns of $R$, in the corresponding order.
Their Gram matrix is then exactly $\widehat{\mathbf G}$.
From this point onward, the symbols $u_i$ and $v_i$ denote these actual
Euclidean vectors rather than just formal labels.

The diagonal entries of $\widehat{\mathbf G}$ are at most one, and
therefore
\[
    \norm{u_i}\le1,
    \qquad
    \norm{v_i}=1,
    \qquad
    1\le i\le k,
    \qquad
    \norm{u_m}\le1.
\]
The vector $v_m$ played no role other than keeping the full candidate
Gram matrix algebraically symmetric; it is not part of the action set
or of the trajectory.

Set
\begin{equation}\label{eq:lower-action-set}
    z\coloneq u_m,
    \qquad
    \mathcal A_k
    \coloneq
    \{0,z\}\cup\{u_i,v_i:1\le i\le k\},
\end{equation}
and, for every $1\le i\le k$, define the net drift
\begin{equation}\label{eq:lower-w-definition}
    w_i\coloneq u_i+v_i.
\end{equation}
By \eqref{eq:lower-drift-norm},
\[
    \norm{w_i}^2=\varepsilon_i.
\]

\paragraph{The proposed sequence.}
Consider the sequence
\begin{equation}\label{eq:lower-sequence}
    (u_1,v_1)^{N_1}
    (u_2,v_2)^{N_2}
    \cdots
    (u_k,v_k)^{N_k},
    z,
\end{equation}
where $(u_i,v_i)^{N_i}$ denotes $N_i$ consecutive repetitions of the
pair $u_i,v_i$.
Thus stage $i$ consists of
\[
    u_i,v_i,u_i,v_i,\ldots,u_i,v_i,
\]
with $N_i$ copies of each vector.

The first
\[
    T_k\coloneq2\sum_{i=1}^kN_i
\]
elements of \eqref{eq:lower-sequence} generate the updates.
The last element $z$ is not used for a further update: it is queried
only at the terminal point in order to certify the final objective
value.

To prove that \eqref{eq:lower-sequence} is prefix-minimizing, we must
show the following.
At every position in the sequence, if $S$ denotes the sum of all
previously selected actions, then the next prescribed action belongs
to
\[
    \argmin_{a\in\mathcal A_k}\ip{S}{a}.
\]
We verify this by maintaining an explicit invariant throughout each
stage.

\paragraph{Inner-product identities.}
The defining blocks of the Gram matrix imply
\begin{align}
    \ip{w_i}{u_i}
    &=
    \ip{u_i}{u_i}+\ip{v_i}{u_i}
    =
    -\varepsilon_i,
    \notag\\
    \ip{w_i}{v_i}
    &=
    \ip{u_i}{v_i}+\ip{v_i}{v_i}
    =
    2\varepsilon_i.
    \label{eq:lower-self-w}
\end{align}
Moreover, if $i<j$, then
\begin{align}
    \ip{w_i}{u_j}
    &=
    \ip{u_i}{u_j}+\ip{v_i}{u_j}
    =
    -\varepsilon_i,
    \notag\\
    \ip{w_i}{v_j}
    &=
    \ip{u_i}{v_j}+\ip{v_i}{v_j}
    =
    -\varepsilon_i,
    \notag\\
    \ip{w_j}{u_i}
    &=
    \ip{u_j}{u_i}+\ip{v_j}{u_i}
    =
    0,
    \notag\\
    \ip{w_j}{v_i}
    &=
    \ip{u_j}{v_i}+\ip{v_j}{v_i}
    =
    -2\varepsilon_i.
    \label{eq:lower-cross-w}
\end{align}
The first two identities say that a completed earlier stage gives the same score contribution to both actions of every future stage.
The last two identities say that a later-stage drift leaves the score of a past $u_i$ unchanged and lowers the score of a past $v_i$.
The multiscale separation is chosen so that, despite this lowering, every past $v_i$ remains strictly above the prescribed current minimum.

\paragraph{The state within stage \(j\).}
Fix a stage $j\in\{1,\ldots,k\}$.
For $0\le r<N_j$, consider the point immediately before the
$(r+1)$-st occurrence of $u_j$.
At that moment, stages $1,\ldots,j-1$ have been completed, and the pair
$(u_j,v_j)$ has already been completed $r$ times.
Hence the cumulative action is
\begin{equation}\label{eq:lower-stage-state}
    S_{j,r}
    \coloneq
    \sum_{s<j}N_sw_s+rw_j.
\end{equation}

Define
\begin{equation}\label{eq:lower-common-score}
    \mu_{j,r}
    \coloneq
    -(j-1)\delta-r\varepsilon_j.
\end{equation}
Since $\delta>0$, $\varepsilon_j>0$, and $r\ge0$, we have
\[
    \mu_{j,r}\le0.
\]

The terminology ``common score'' comes from the following calculation.
Using \eqref{eq:lower-self-w},
\eqref{eq:lower-cross-w}, and
$N_s\varepsilon_s=\delta$, we obtain
\begin{equation}\label{eq:lower-current-u-score}
    \ip{S_{j,r}}{u_j}
    =
    -\sum_{s<j}N_s\varepsilon_s-r\varepsilon_j
    =
    \mu_{j,r}.
\end{equation}
Similarly, every action belonging to a strictly later stage has the
same score:
\begin{equation}\label{eq:lower-future-score}
    \ip{S_{j,r}}{u_\ell}
    =
    \ip{S_{j,r}}{v_\ell}
    =
    \mu_{j,r},
    \qquad
    j<\ell\le k,
\end{equation}
and the same identity holds for the terminal witness $z=u_m$.

The other action in the current pair satisfies
\begin{align}
    \ip{S_{j,r}}{v_j}
    &=
    -\sum_{s<j}N_s\varepsilon_s
    +2r\varepsilon_j
    \notag\\
    &=
    \mu_{j,r}+3r\varepsilon_j
    \ge
    \mu_{j,r}.
    \label{eq:lower-current-v-score}
\end{align}

Now consider an action belonging to a completed stage.
For $i<j$, the identities in
\eqref{eq:lower-self-w}--\eqref{eq:lower-cross-w} give
\begin{equation}\label{eq:lower-old-u-score}
    \ip{S_{j,r}}{u_i}
    =
    -\sum_{s\le i}N_s\varepsilon_s
    =
    -i\delta.
\end{equation}
Since $i\le j-1$,
\[
    -i\delta
    \ge
    -(j-1)\delta
    \ge
    \mu_{j,r}.
\]

The score of a past $v_i$ requires a slightly more detailed
calculation.
For $i<j$, separating the stages before $i$, the stage $i$ itself, the
intermediate stages, and the current partial stage gives
\begin{align}
    \ip{S_{j,r}}{v_i}
    &=
    -\sum_{s<i}N_s\varepsilon_s
    +2N_i\varepsilon_i
    -2\varepsilon_i\sum_{s=i+1}^{j-1}N_s
    -2r\varepsilon_i
    \notag\\
    &=
    -(i-1)\delta
    +2\delta
    -2\varepsilon_i\sum_{s=i+1}^{j-1}N_s
    -2r\varepsilon_i.
    \label{eq:lower-old-v-expanded}
\end{align}
For $s>i$, the scale relations give
\[
    \varepsilon_s=Q^{s-i}\varepsilon_i,
    \qquad
    \varepsilon_iN_s
    =
    \delta Q^{-(s-i)}.
\]
Subtracting $\mu_{j,r}$ from
\eqref{eq:lower-old-v-expanded}, we therefore obtain
\begin{align}
    \ip{S_{j,r}}{v_i}-\mu_{j,r}
    &=
    (j-i+2)\delta
    -2\delta\sum_{h=1}^{j-i-1}Q^{-h}
    +r(\varepsilon_j-2\varepsilon_i).
    \label{eq:lower-old-v}
\end{align}
Here and below an empty sum is understood as zero.

Because $j-i\ge1$,
\[
    j-i+2\ge3.
\]
Moreover,
\[
    \varepsilon_j
    =
    Q^{j-i}\varepsilon_i
    \ge
    Q\varepsilon_i
    >
    2\varepsilon_i,
\]
so the last term in \eqref{eq:lower-old-v} is nonnegative.
Finally,
\[
    \sum_{h=1}^{j-i-1}Q^{-h}
    \le
    \sum_{h=1}^{\infty}Q^{-h}
    =
    \frac{1}{Q-1}.
\]
It follows that
\begin{equation}\label{eq:lower-old-v-margin}
    \ip{S_{j,r}}{v_i}
    \ge
    \mu_{j,r}+\gamma,
    \qquad
    \gamma
    \coloneq
    3\delta-\frac{2\delta}{Q-1}.
\end{equation}
For the chosen values $Q=100$ and $\delta=1/4$,
\[
    \gamma
    =
    \frac34-\frac1{198}
    >
    \frac12.
\]

The scores immediately before selecting $u_j$, together with the
changes caused by adding $u_j$, are summarized in the following table:
\[
\renewcommand{\arraystretch}{1.35}
\begin{array}{c|c|c}
\text{action }a
&
\ip{S_{j,r}}{a}
&
\ip{u_j}{a}
\\ \hline
u_j
&
\mu_{j,r}
&
1-3\varepsilon_j
\\
v_j
&
\mu_{j,r}+3r\varepsilon_j
&
-(1-2\varepsilon_j)
\\
\substack{u_\ell,v_\ell\ (j<\ell\le k),\\ z}
&
\mu_{j,r}
&
0
\\
u_i\ (i<j)
&
-i\delta\ge\mu_{j,r}
&
0
\\
v_i\ (i<j)
&
\ge\mu_{j,r}+\gamma
&
-\varepsilon_i
\\
0
&
0
&
0
\end{array}
\]

Equations~\eqref{eq:lower-current-u-score}--%
\eqref{eq:lower-old-v-margin} show that every action has score at least
$\mu_{j,r}$, whereas $u_j$ has score exactly $\mu_{j,r}$.
Therefore
\[
    u_j
    \in
    \argmin_{a\in\mathcal A_k}\ip{S_{j,r}}{a}.
\]
The minimizer need not be unique at this point, which is allowed in the
definition of a prefix-minimizing sequence.

\paragraph{Selecting the second action in the pair.}
After selecting $u_j$, the cumulative action becomes
\[
    S_{j,r}+u_j.
\]
The new score of $v_j$ satisfies
\begin{align}
    \ip{S_{j,r}+u_j}{v_j}-\mu_{j,r}
    &=
    3r\varepsilon_j-(1-2\varepsilon_j)
    \notag\\
    &\le
    3(N_j-1)\varepsilon_j-1+2\varepsilon_j
    \notag\\
    &=
    3N_j\varepsilon_j-\varepsilon_j-1
    \notag\\
    &=
    -\frac14-\varepsilon_j,
    \label{eq:vj-margin}
\end{align}
where we used $N_j\varepsilon_j=\delta=1/4$.
Consequently,
\begin{equation}\label{eq:vj-strictly-below-mu}
    \ip{S_{j,r}+u_j}{v_j}
    \le
    \mu_{j,r}-\frac14-\varepsilon_j
    <
    \mu_{j,r}.
\end{equation}

We now compare this score with every other action.

First, the score of $u_j$ after adding $u_j$ is
\[
    \ip{S_{j,r}+u_j}{u_j}
    =
    \mu_{j,r}+1-3\varepsilon_j
    >
    \mu_{j,r},
\]
because
\[
    \varepsilon_j
    \le
    \varepsilon_m
    =
    \frac{1}{400m^2}
    <
    \frac13.
\]
Every future action, including $z$, still has score exactly
$\mu_{j,r}$.
Every past $u_i$ still has score at least $\mu_{j,r}$.
For a past $v_i$, using
\eqref{eq:lower-old-v-margin} and
$\ip{u_j}{v_i}=-\varepsilon_i$, we have
\[
    \ip{S_{j,r}+u_j}{v_i}
    \ge
    \mu_{j,r}+\gamma-\varepsilon_i
    >
    \mu_{j,r},
\]
since $\gamma>1/2$ and $\varepsilon_i<1/400$.
Finally, the zero action has score
\[
    \ip{S_{j,r}+u_j}{0}=0\ge\mu_{j,r}.
\]

Together with \eqref{eq:vj-strictly-below-mu}, these comparisons show
that $v_j$ is the unique minimizer after $u_j$ has been selected:
\[
    \{v_j\}
    =
    \argmin_{a\in\mathcal A_k}
    \ip{S_{j,r}+u_j}{a}.
\]

Selecting $v_j$ completes the pair and changes the cumulative action by
\[
    u_j+v_j=w_j.
\]
Therefore
\begin{equation}\label{eq:lower-pair-transition}
    S_{j,r}+u_j+v_j
    =
    S_{j,r}+w_j
    =
    S_{j,r+1}.
\end{equation}
This proves inductively that all $N_j$ repetitions of the pair
$(u_j,v_j)$ satisfy the prefix-minimizing condition.

At the end of stage $j$,
\begin{align}
    S_{j,N_j}
    &=
    \sum_{s<j}N_sw_s+N_jw_j
    \notag\\
    &=
    \sum_{s<j+1}N_sw_s
    =
    S_{j+1,0}.
    \label{eq:lower-stage-transition}
\end{align}
Thus the invariant at the end of one stage is exactly the invariant
required at the beginning of the next.
Induction first over $r=0,\ldots,N_j-1$ and then over
$j=1,\ldots,k$ proves the prefix-minimizing property throughout all
repeated pairs.

\paragraph{The terminal witness.}
After all $k$ stages have been completed, the cumulative action is
\begin{equation}\label{eq:lower-terminal-state}
    S_\star
    \coloneq
    \sum_{i=1}^kN_iw_i.
\end{equation}
Since $m=k+1$, this is precisely the state that would be denoted by
\[
    S_{m,0}
    =
    \sum_{i<m}N_iw_i.
\]
The same score comparison used above for $u_j$ may therefore be applied
with $j=m$ and $r=0$.
The current action is now
\[
    u_m=z.
\]
There are no later stages, and $v_m$ does not belong to
$\mathcal A_k$.
The comparison shows that
\[
    z
    \in
    \argmin_{a\in\mathcal A_k}\ip{S_\star}{a}.
\]
Thus $z$ is a valid terminal query.

Its score is
\begin{align}
    \ip{S_\star}{z}
    &=
    \sum_{i=1}^kN_i\ip{w_i}{u_m}
    \notag\\
    &=
    -\sum_{i=1}^kN_i\varepsilon_i
    \notag\\
    &=
    -k\delta
    =
    -\frac{k}{4}.
    \label{eq:lower-terminal-score}
\end{align}
Hence the complete sequence in \eqref{eq:lower-sequence} is
prefix-minimizing and has terminal depth $k/4$.

\paragraph{Length of the construction.}
The number of updates generated before the terminal query is
\begin{align}
    T_k
    &=
    2\sum_{i=1}^kN_i
    \notag\\
    &=
    200(k+1)^2
    \sum_{i=1}^kQ^{k+1-i}
    \notag\\
    &=
    200(k+1)^2
    \sum_{r=1}^k100^r
    \notag\\
    &<
    1000^{k+1}.
    \label{eq:lower-horizon}
\end{align}
The full prefix-minimizing sequence contains one additional terminal
query, namely $z$.
Thus, defining
\[
    n_k\coloneq T_k+1,
\]
we have
\[
    n_k\le1000^{k+1}.
\]
This proves \Cref{thm:prefix-minimizing}.
\subsection{Proof of the lower bound}\label{sec:lower-proof}
\begin{proof}[Proof of \Cref{thm:lower}]
Apply \Cref{lem:support-realization} to the action set and sequence from \Cref{thm:prefix-minimizing}.
The support function
\[
    F(y)
    =
    \max_{a\in\mathcal A_k}\ip{a}{y}
\]
is convex, globally $1$-Lipschitz, and minimized at the origin.
The first $T_k$ actions generate a unit-step subgradient trajectory starting at $y_1=0$, and \eqref{eq:prefix-minimizing-depth} gives
\begin{equation}\label{eq:lower-normalized-gap}
    F(y_{T_k+1})
    =
    \frac{k}{4}.
\end{equation}
For arbitrary $\eta,L>0$, define
\begin{equation}\label{eq:lower-scaling}
    f(x)
    \coloneq
    \eta L^2
    F\left(\frac{x}{\eta L}\right).
\end{equation}
Then $f$ is convex, and globally $L$-Lipschitz.
Whenever $a_t\in\partial F(y_t)$, the vector $La_t$ belongs to $\partial f(\eta Ly_t)$.
Consequently,
\[
    x_t\coloneq\eta Ly_t,
    \qquad
    g_t\coloneq La_t
\]
generate the unconstrained step-$\eta$ method, starting from the minimizer $x_1=0$, and
\[
    f(x_{T_k+1})-f_\star
    =
    \frac{k}{4}\eta L^2.
\]
The active dimension is $2k+1$, and zero padding embeds the construction into every $\bbR^d$ with $d\ge2k+1$.
By \eqref{eq:lower-horizon}, the integer
$n_k\coloneq T_k+1$ satisfies $n_k\le1000^{k+1}$.
For any $n\ge n_k$, prepend $n-n_k$ updates using the zero subgradient at the initial minimizer and then execute the construction.
\end{proof}

\subsection{Proof of the sharpness corollary}\label{sec:sharp-corollary-proof}
\begin{proof}[Proof of \Cref{cor:sharp-joint}]
Let $d\ge1$ and $n\ge2$.
A one-step construction gives the universal lower bound $\Gamma_{d,n}\ge1$.
Take the unconstrained objective
\[
    f(x)=L\abs{\ip{e_1}{x}},
\]
start from $x_1=0$, use the zero subgradient for the first $n-2$ updates, and choose
$g_{n-1}=-Le_1\in\partial f(0)$ at the final update.
Then $x_n=\eta Le_1$ and $f(x_n)-f_\star=\eta L^2$.

For the multiscale lower bound, set
\[
    k
    \coloneq
    \max\left\{
        0,
        \min\left\{
            \left\lfloor\frac{d-1}{2}\right\rfloor,
            \left\lfloor\log_{1000}n\right\rfloor-1
        \right\}
    \right\}.
\]
If $k\ge1$, then $d\ge2k+1$ and $n\ge1000^{k+1}$.
Applying \Cref{thm:lower} and padding to horizon $n$ yields
\[
    \Gamma_{d,n}\ge\frac{k}{4}.
\]
Together with the one-step construction, this implies
\[
    \Gamma_{d,n}
    \ge
    c_0\min\{d,\log(n+1)\}
\]
for a universal constant $c_0>0$.
Indeed, the one-step example covers the regime in which the minimum is bounded, while outside that regime the floors and the change from base $1000$ to the natural logarithm lose only universal factors.

The dimension-dependent upper bound
$\Gamma_{d,n}\le2d+1/2$ follows from \Cref{thm:upper} by setting
$\dist(x_1,\cX_\star)=0$.
For the dimension-free upper bound, apply the constant-step estimate of Zamani and Glineur
\cite[Theorem~3.4 and the subsequent logarithmic simplification]{Zamani2025} to $n-1$ updates.
If the actual initial distance is zero, it is bounded by every auxiliary radius $R>0$.
Writing their normalized step parameter as $h=\eta L/R$, their large-step estimate gives, for all sufficiently small $R$,
\[
    f(x_n)-f_\star
    \le
    \left(1+\frac14\log(n-1)\right)\eta L^2
    +
    \frac{R^2}{4n\eta}.
\]
Letting $R\downarrow0$ proves
\[
    \Gamma_{d,n}
    \le
    \min\left\{
        2d+\frac12,
        1+\frac14\log(n-1)
    \right\}.
\]
The final upper inequality in \eqref{eq:Gamma-theta} follows by an elementary comparison, after adjusting a universal constant to cover the bounded regime.

Finally, \Cref{thm:upper} gives $A_d^\star\le2d+1/2$.
Taking $k=\lfloor(d-1)/2\rfloor$ in \Cref{thm:lower}, and using the one-step lower bound for $d\le2$, gives the lower inequality in \eqref{eq:Ad-two-sided}.
\end{proof}

\section{Conclusions}\label{sec:final}
We have completely resolved the deterministic fixed-dimensional last-iterate problem for constant-step projected subgradient descent at the level of asymptotic order.
The upper bound
\[
    f(x_n)-f_\star
    \le
    \left(2d+\frac12\right)\eta L^2
    +
    \frac{\dist(x_1,\cX_\star)^2}{2n\eta}
\]
shows that every fixed finite dimension removes the logarithmic loss in the horizon.
The matching construction shows that the linear dependence on $d$ is unavoidable even without constraints and even when the method starts exactly at a minimizer.
Therefore the optimal all-horizon coefficient is $\Theta(d)$.

The joint picture is sharper still.
At horizon $n$, the worst-case normalized terminal excursion from a minimizer is
\[
    \Theta\!\bigl(\min\{d,\log(n+1)\}\bigr).
\]
Thus the high-dimensional logarithmic regime and the fixed-dimensional linear regime are two sides of the same phenomenon, with the transition occurring when $d$ is comparable to $\log n$.
For the standard choice $\eta=\Theta(1/\sqrt n)$, the corresponding last-iterate multiplier is
$\Theta(\min\{d,\log(n+1)\}/\sqrt n)$.

This conclusion is notably different from the logarithmic dimension dependence suggested in the earlier literature.
Koren and Segal proposed $\log d$ as the natural candidate for the corresponding stochastic fixed-dimensional question, while Liu and Lu supplied deterministic $\Omega(\log d)$ evidence for both decreasing and fixed stepsizes.
The multiscale prefix-minimizing construction shows that the deterministic fixed-stepsize worst case is substantially more severe: each additional constant number of dimensions can support another constant amount of terminal error.
The two proofs expose the same geometric scale from opposite directions: the upper bound allows at most $d$ last-exit vectors in each of two parity classes, while the lower bound spends two fresh directions to create another constant amount of terminal depth.
The remaining quantitative question is the exact constant in $A_d^\star$. The order in both dimension and horizon is closed.

\section*{Acknowledgments}

TC gratefully acknowledges the support of the Natural Sciences and Engineering Research Council of Canada (NSERC) through grant RGPIN-2023-03688 (Discovery Grants Program).

\newpage

\bibliographystyle{plain}
\bibliography{new_biblio}

\newpage

\appendix
\section{Some Standard Facts from Convex Analysis}\label{app:useful}
This appendix collects standard facts from convex analysis used in the paper; see, e.g.,~\cite{Rockafellar1970}.
Proofs are included for completeness.

\begin{proof}[Proof of \Cref{lem:ambient-subgradients}]
Since a finite-valued convex function on $\bbR^d$ is continuous, its epigraph is closed and convex.
The finite-dimensional supporting-hyperplane theorem at the boundary point $(y,F(y))$ gives a nonzero pair $(a,b)\in\bbR^d\times\bbR$ such that
\[
    \ip{a}{z-y}+b\bigl(r-F(y)\bigr)\ge0
    \qquad
    \text{for every }(z,r)\in\operatorname{epi}F.
\]
Letting $r\to+\infty$ gives $b\ge0$.
If $b=0$, then $\ip{a}{z-y}\ge0$ for every $z\in\bbR^d$, which forces $a=0$, a contradiction.
Thus $b>0$.
Taking $r=F(z)$ gives
\[
    F(z)\ge F(y)+\ip{-a/b}{z-y},
\]
so $-a/b\in\partial F(y)$.

Now suppose that $F$ is globally $K$-Lipschitz and let $h\in\partial F(y)$.
If $h\ne0$, then for every $t>0$,
\[
    F\left(y+t\frac{h}{\norm{h}}\right)
    \ge
    F(y)+t\norm{h},
\]
whereas Lipschitzness gives
\[
    F\left(y+t\frac{h}{\norm{h}}\right)
    \le
    F(y)+Kt.
\]
Hence $\norm{h}\le K$.
The case $h=0$ is immediate.
\end{proof}

\begin{proposition}\label{prop:nonexpansiveness}
For every nonempty closed convex set $\cS\subseteq\bbR^d$, the Euclidean projection $\Pi_\cS$ is non-expansive, i.e.,
\[
\norm{\Pi_\cS(x)-\Pi_\cS(y)}
\le
\norm{x-y}
\qquad
\text{for every }x,y\in\bbR^d.
\]
\end{proposition}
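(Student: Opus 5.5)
The plan is to use the variational characterization of the projection. First I will recall the existence and uniqueness of $\Pi_\cS(x)$ for nonempty closed convex $\cS$. Existence comes from minimizing the continuous coercive function $z\mapsto\norm{x-z}^2$ over the closed set $\cS$; equivalently, one can intersect $\cS$ with a closed ball around $x$ to get a compact set. Uniqueness comes from strict convexity of the squared norm together with convexity of $\cS$.

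Next I will establish the obtuse-angle characterization. A point $p\in\cS$ equals $\Pi_\cS(x)$ if and only if
\[
    \ip{x-p}{z-p}\le0
    \qquad
    \text{for every }z\in\cS.
\]
For necessity, fix $z\in\cS$ and note that $p+t(z-p)\in\cS$ for every $t\in[0,1]$. Expanding the inequality $\norm{x-p-t(z-p)}^2\ge\norm{x-p}^2$, dividing by $t>0$, and letting $t\downarrow0$ gives the claim. Only this direction is actually needed for the nonexpansiveness argument.

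With the characterization in hand, set $p=\Pi_\cS(x)$ and $q=\Pi_\cS(y)$. Applying the inequality at $x$ with $z=q$ gives $\ip{x-p}{q-p}\le0$. Applying it at $y$ with $z=p$ gives $\ip{y-q}{p-q}\le0$. Adding the two yields
\[
    \ip{(x-y)-(p-q)}{p-q}\ge0,
    \qquad\text{that is,}\qquad
    \norm{p-q}^2\le\ip{x-y}{p-q}.
\]
The Cauchy--Schwarz inequality then gives $\norm{p-q}^2\le\norm{x-y}\,\norm{p-q}$. Dividing by $\norm{p-q}$ when it is nonzero, and noting the case $p=q$ is trivial, proves the claim.

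None of these steps is a real obstacle. The only point requiring care is the limiting argument in the variational inequality, where one must expand the square correctly so that the cross term $-2t\ip{x-p}{z-p}$ survives after dividing by $t$.
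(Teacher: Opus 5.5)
Your proposal is correct and follows essentially the same route as the paper: the paper's proof likewise applies the projection's variational inequality at $x$ with comparator $q$ and at $y$ with comparator $p$, adds them to get $\norm{p-q}^2\le\ip{p-q}{x-y}$, and concludes by Cauchy--Schwarz. The only difference is that you also derive existence, uniqueness, and the variational inequality itself, which the paper simply cites as standard.
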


\begin{proof}
The Euclidean projection satisfies the variational inequality
\[
\ip{x-\Pi_\cS(x)}{s-\Pi_\cS(x)}\le0
\qquad
\text{for every }x\in\bbR^d\text{ and }s\in\cS.
\]
Let
\[
p\coloneq\Pi_\cS(x),
\qquad
q\coloneq\Pi_\cS(y).
\]
The variational inequality gives
\[
\ip{x-p}{q-p}\le0,
\qquad
\ip{y-q}{p-q}\le0.
\]
Combining these inequalities yields
\[
\norm{p-q}^2
\le
\ip{p-q}{x-y}
\le
\norm{p-q}\norm{x-y},
\]
and the claim follows.
\end{proof}

\begin{proposition}\label{prop:minimizer-projection}
Suppose \Cref{ass:minimizers,ass:lipschitz} hold.
Then $\cX_\star$ is closed and convex, and for every $x\in\bbR^d$ there exists a unique $x_\star\in\cX_\star$ such that
\[
\norm{x-x_\star}=\dist(x,\cX_\star).
\]
\end{proposition}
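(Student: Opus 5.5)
The statement to prove is \Cref{prop:minimizer-projection}: $\cX_\star$ is closed and convex, and every point has a unique nearest point in it. The plan is to first establish that $\cX_\star$ is a nonempty closed convex set, and then invoke the standard existence-and-uniqueness argument for Euclidean projection onto such sets.

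\emph{Structure of $\cX_\star$.} By \Cref{ass:minimizers}, $\cX_\star$ is nonempty and $f_\star=\min_{\cX}f$ is finite. I would write
\[
\cX_\star=\cX\cap\{x\in\bbR^d: f(x)\le f_\star\}.
\]
The sublevel set of the convex function $f$ is convex. It is also closed, because $f$ is continuous: it is $L$-Lipschitz by \Cref{ass:lipschitz}. Since $\cX$ is closed and convex, the intersection $\cX_\star$ is closed and convex.

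\emph{Existence of a nearest point.} Fix $x$ and any $x_0\in\cX_\star$. Minimizing the continuous function $z\mapsto\norm{x-z}$ over $\cX_\star$ is equivalent to minimizing it over the compact set
\[
\cX_\star\cap\{z:\norm{x-z}\le\norm{x-x_0}\},
\]
which is nonempty because it contains $x_0$. A minimizer therefore exists by Weierstrass, and it attains $\dist(x,\cX_\star)$.

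\emph{Uniqueness.} Suppose $a,b\in\cX_\star$ both attain the distance $\rho$. Their midpoint lies in $\cX_\star$ by convexity. The parallelogram identity gives
\[
\Bigl\|x-\tfrac{a+b}{2}\Bigr\|^2=\rho^2-\tfrac14\norm{a-b}^2.
\]
Since the midpoint is a feasible point, its distance to $x$ is at least $\rho$, so $\norm{a-b}=0$ and $a=b$.

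None of these steps is a real obstacle. The only point needing care is closedness, which relies on the continuity of $f$ supplied by \Cref{ass:lipschitz}.
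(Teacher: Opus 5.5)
Your proposal is correct and follows the paper's route: write $\cX_\star$ as the intersection of $\cX$ with a (sub)level set of the continuous convex function $f$ to get closedness and convexity, then apply the projection theorem. The only difference is that you prove the projection theorem inline (Weierstrass on a compact truncation for existence, the parallelogram identity for uniqueness), whereas the paper simply cites it.
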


\begin{proof}
By \Cref{ass:minimizers}, the set $\cX_\star$ is nonempty.
Since $f$ is continuous,
\[
\cX_\star=f^{-1}(\{f_\star\})\cap\cX
\]
is closed.
Since $f$ is convex, the sublevel set $\{f\le f_\star\}$ is convex, and therefore
\[
\cX_\star=\{f\le f_\star\}\cap\cX
\]
is convex.
Existence and uniqueness of the nearest point now follow from the projection theorem for nonempty closed convex subsets of a finite-dimensional Euclidean space.
\end{proof}

\section{Positive Definiteness of the Gram Construction}\label{app:gram-pd}
\begin{proof}[Proof of \Cref{lem:lower-pd}]
Use the ordered coordinates
\[
    u_1,\ldots,u_m,
    \widehat w_1,\ldots,\widehat w_m,
    \qquad
    \widehat w_i\coloneq\frac{u_i+v_i}{\sqrt{\varepsilon_i}}.
\]
This is an invertible change of coordinates, and the congruent Gram matrix has the block form
\[
    \widetilde G
    =
    \begin{pmatrix}
        A&E\\
        E^{\mathsf T}&R
    \end{pmatrix},
\]
where
\[
    A=\operatorname{diag}(1-3\varepsilon_i),
    \qquad
    E_{ri}=-\sqrt{\varepsilon_i}\,\mathbf 1_{\{r\ge i\}},
\]
and
\[
    R_{ii}=1,
    \qquad
    R_{ij}=-2\cdot10^{-\abs{i-j}}
    \quad(i\ne j).
\]
Since $\varepsilon_i\le\varepsilon_m=1/(400m^2)\le1/400$,
\begin{equation}\label{eq:A-lower}
    A\succeq\frac{397}{400}I.
\end{equation}
The absolute off-diagonal row sum of $R$ is at most
\[
    4\sum_{s\ge1}10^{-s}=\frac49,
\]
so Gershgorin's theorem gives
\begin{equation}\label{eq:R-lower}
    R\succeq\frac59 I.
\end{equation}
Moreover,
\[
    \norm{E}^2
    \le
    \norm{E}_{\mathrm F}^2
    =
    \sum_{i=1}^m(m-i+1)\varepsilon_i
    \le
    m^2\varepsilon_m
    =
    \frac1{400},
\]
and therefore $\norm{E}\le1/20$.
For every $x,y\in\bbR^m$, \eqref{eq:A-lower}, \eqref{eq:R-lower}, and
$2\norm{x}\norm{y}\le\norm{x}^2+\norm{y}^2$ yield
\begin{align*}
    \begin{pmatrix}x\\y\end{pmatrix}^{\!\mathsf T}
    \widetilde G
    \begin{pmatrix}x\\y\end{pmatrix}
    &\ge
    \frac{397}{400}\norm{x}^2
    +
    \frac59\norm{y}^2
    -
    2\norm{E}\norm{x}\norm{y}\\
    &\ge
    \left(\frac{397}{400}-\frac1{20}\right)\norm{x}^2
    +
    \left(\frac59-\frac1{20}\right)\norm{y}^2
    >0
\end{align*}
whenever $(x,y)\ne0$.
Thus $\widetilde G\succ0$, and hence $\mathbf G\succ0$.
Every principal submatrix is positive definite.
Finally,
$\norm{u_i}^2=1-3\varepsilon_i\le1$ and $\norm{v_i}^2=1$.
\end{proof}

\end{document}